\documentclass[a4paper, 11pt]{article}

\usepackage{enumerate}
\usepackage{authblk}
\setlength{\affilsep}{0.2em}
\usepackage{comment}
\usepackage{amssymb}
\usepackage{amsthm}
\usepackage{amscd}
\usepackage{graphicx}
\usepackage{titlesec}
\usepackage{indentfirst}
\titleformat{\section}{\bfseries}{\thesection .}{0.5em}{}
\titleformat{\subsection}{\itshape}{\thesubsection .}{0.5em}{}
\usepackage[top=25.4mm, bottom=25.4mm, left=27.7mm, right=27.8mm]{geometry}

\usepackage[tbtags]{amsmath}
\usepackage{booktabs}
\allowdisplaybreaks  

\usepackage{caption}
\captionsetup[figure]{labelsep=space}
\captionsetup[table]{labelsep=space}
\captionsetup{font={small}}

\begin{document}

\newtheorem{theorem}{Theorem}[section]
\newtheorem{corollary}[theorem]{Corollary}
\newtheorem{lemma}[theorem]{Lemma}
\newtheorem{proposition}[theorem]{Proposition}
\theoremstyle{definition}
\newtheorem{definition}[theorem]{Definition}
\newtheorem{example}[theorem]{Example}
\newtheorem{remark}[theorem]{Remark}

\title{\Large $L^0$--convex compactness and  random normal structure in $L^0(\mathcal{F},B)$  }
\author{
{\normalsize Tiexin Guo\thanks{Corresponding author\\
\textit{Email addresses}: tiexinguo@csu.edu.cn (Tiexin Guo), zhangerxin6666@163.com (Erxin Zhang), wychao@csu.edu.cn (Yachao Wang).}, Erxin Zhang, Yachao Wang}\\

\textit{\small School of Mathematics and Statistics, Central South University, Changsha 410083, China.}\\

{\normalsize George Yuan \thanks{\\
\textit{Email addresses}: george\_yuan@yahoo.com (George Yuan)}}\\

\textit{\small School of Mathematics, Shanghai University of Finance and Economics, Shanghai 200433, China.}}

\date{}
\maketitle

\renewcommand{\baselinestretch}{1.2}
\large\normalsize
\noindent \rule[0.5pt]{14.5cm}{0.6pt}\\
\noindent
\textbf{Abstract}\\
Let $(B,\|\cdot\|)$ be a Banach space, $(\Omega,\mathcal{F},P)$ a probability space and $L^0(\mathcal{F},B)$ the set of equivalence classes of strong random elements (or strongly measurable functions) from $(\Omega,\mathcal{F},P)$ to $(B,\|\cdot\|)$. It is well known that $L^0(\mathcal{F},B)$ becomes a complete random normed module, which has played an important role in the process of applications of random normed modules to the theory of Lebesgue--Bochner function spaces and random functional analysis. Let $V$ be a closed convex subset of $B$ and $L^0(\mathcal{F},V)$ the set of equivalence classes of strong random elements from $(\Omega,\mathcal{F},P)$ to $(B,\|\cdot\|)$, the central purpose of this paper is to prove the following two results: (1). $L^0(\mathcal{F},V)$ is $L^0$--convexly compact if and only if $V$ is weakly compact; (2). $L^0(\mathcal{F},V)$ has random normal structure if $V$ is weakly compact and has normal structure. As an application, a general random fixed point theorem for a strong random nonexpansive operator is given, which generalizes and improves several well known results. We hope that our new method, namely skillfully combining measurable selection theorems, the theory of random normed modules and Banach space techniques, can be applied in the other related aspects.

\vspace{0.3cm}
\noindent
Keywords:\\
complete random normed modules; fixed point theorem; $L^0$--convex compactness; random normal structure; random nonexpansive operators

\noindent \rule[0.5pt]{14.5cm}{0.6pt}\\
\noindent
\textbf{Mathematics Subject Classification (2010):46A16; 46A99; 60H05; 47H10}
\section{Introduction}  
\par
Random normed modules (briefly, $RN$ modules) are a random generalization of ordinary normed spaces. The theory of $RN$ modules has undergone a systematic and deep development \cite{Guo89,Guo92,Guo93,Guo95,Guo96a,Guo08,Guo10,Guo13,GL05,GY96,GZ10,GZ12} and has also been applied to the study of conditional risk measures \cite{GZWW17,GZWYYZ17,GZZ14,GZZ15a,GZZ15b} and backward stochastic equations \cite{GZWG18}. Motivated by financial applications, we recently presented the notion of $L^0$--convex compactness in \cite{GZWW17} and further the notion of random normal structure for an $L^0$--convex subset of an $RN$ module in \cite{GZWG18}, in particular we proved in \cite{GZWG18} $($ see also Section 2 of this paper for details$)$ the Browder--Kirk's fixed point theorem in a complete $RN$ module, which is a random generalization of the famous Browder--Kirk's fixed point theorem in a Banach space \cite{B65,GR84,G65,K65,K83}. The generalized Browder--Kirk's fixed point theorem has been applied to the study of backward stochastic equations and backward stochastic differential equations of nonexpansive type in \cite{GZWG18}. In this paper we find that the fixed point theorem can be further used to study Bharucha--Reid's problem, which we introduce in the following statement of backgrounds of random fixed point theory. Let $(B,\|\cdot\|)$ be a Banach space, $(\Omega,\mathcal{F},P)$ a probability space, $V\subset B$ a closed convex subset and $L^0(\mathcal{F},B)(L^0(\mathcal{F},V))$ the set of equivalence classes of $B$--valued$($ correspondingly, $V$--valued$)$ strong $\mathcal{F}$--random elements defined on $\Omega$, then $L^0(\mathcal{F},B)$ becomes an $RN$ module in a natural fashion and $L^0(\mathcal{F},V)$ a closed $L^0$--convex subset of $L^0(\mathcal{F},B)($ see Section 2 of this paper$)$. In the process of applying the generalized Browder--Kirk's fixed point theorem to Bharucha--Reid's problem, it is very key to solve the following two problems:
\begin{enumerate}[(1)]
\item When $V$ is weakly compact, is $L^0(\mathcal{F},V)$ $L^0$--convexly compact ?
\item When $V$ is a weakly compact convex subset with normal structure, does $L^0(\mathcal{F},V)$ have random normal structure ?
\end{enumerate}
\par
Random fixed point theory for random operators was initiated by A.$\check{S}$pac$\check{e}$k and O.Han$\check{s}$ in \cite{S55,H57a,H57b,H61} for the study of random operator equations, The main difficulty lies in establishing measurability of random fixed points. For the fixed points of contractive random operators, their measurability can be established by successive approximant methods \cite{H57b}. But for the random generalization of topological fixed point theorems like Schauder's fixed point theorem, the measurability problem can be solved by measurable selection theorems as surveyed in \cite{W77}, see, e.g.,\cite{B76}. The two kinds of methods both heavily depend on the separability of spaces or sets in question. After Bharucha--Reid \cite{B76} surveyed the random generalizations of Banach's and Schauder's fixed point theorems, he also ever presented the problem of random generalization of fixed point theorems for nonexpansive mappings$($see $(i)$ in Section 5 of \cite{B76}$)$. Browder--Kirk's fixed point theorem\cite{B65,GR84,G65,K65,K83} is, without doubt, the most famous one for nonexpansive mappings, which can be stated as follows: let $(B,\|\cdot\|)$ be a Banach space, $V\subset B$ a weakly compact convex subset with normal structure and $T: V\rightarrow V$ a nonexpansive mapping, then $T$ has a fixed point in $V$. Let $(\Omega,\mathcal{F},P)$ be a probability space, $(M,d)$ and $(M_1.d_1)$ two metric spaces and $T:\Omega\times M\rightarrow M_1$ a mapping, let us recall from \cite{B72,B76,Wang62} : $(1)$ a mapping $X:\Omega\rightarrow M_1$ is said to be a random element$($or, $\mathcal{F}$--random element$)$ if $X^{-1}(G):=\{\omega\in \Omega: X(\omega)\in G\}\in \mathcal{F}$ for each open set $G$ of $M_1$, furthermore, a random element $X$ is said to be simple if it only takes finitely many values, and $X$ is said to be a strong random element if $X$ is the pointwise limit of a sequence of simple random elements, it is well known that $X:\Omega \rightarrow M_1$ is a strong random element if and only if both $X$ is a random element and its range $X(\Omega)$ is a separable subset of $M_1$; $(2)$ $T$ is called a random operator if $T(\cdot,x):\Omega\rightarrow M_1$ is a random element for each $x\in M$, furthermore $T$ is called a strong random operator if $T(\cdot,x)$ is a strong random element for each $x\in M$. Then Bharucha--Reid's problem can be precisely stated as follows: let $(B,\|\cdot\|)$ be a Banach space, $V\subset B$ a weakly compact convex subset with normal structure, $(\Omega,\mathcal{F},P)$ a probability space and $T:\Omega\times V\rightarrow V$ a random nonexpansive operator$($namely, for each $\omega\in \Omega, \|T(\omega,v_1)-T(\omega,v_2)\|\leq\|v_1-v_2\|$ for all $v_1,v_2\in V)$, then, is there a random element $v:\Omega\rightarrow V$ such that $T(\omega,v(\omega))=v(\omega)$ for almost all $\omega\in \Omega$ ?
\par
Lin\cite{L88} and Xu\cite{Xu90} first studied Bharucha--Reid's problem, in particular, Xu\cite{Xu90} partly solved the problem under the assumption that $V$ is a nonempty closed bounded convex separable subset of a reflexive Banach space $B$ such that $V$ has the fixed point property for nonexpansive mappings. It is obvious that $T$ must be a strong random operator when $B$ or $V$ is separable. In this paper we prove that $T$ still has a random fixed point under the weaker assumption that $T$ is only required to be a strong random operator without the requirement that $V$ or $B$ is separable. Our result has an advantage: it includes the classical Browder--Kirk's fixed point theorem as a special case and thus is also more natural. As compared with Lin\cite{L88} and Xu\cite{Xu90}, their methods are making use of measurable selection theorems, whereas ours are based on the recent development of the theory of random normed modules$($ briefly, $RN$ modules$)$ since measurable selection theorems can no longer apply to the case when $B$ or $V$ is not separable. Although we also apply measurable selection theorems, measurable selection theorems are used in this paper in order to deeply develop the theory of $RN$ modules.
\par
The success of this paper lies in answering the above--mentioned problems in a positive way, in particular our method is a skillful combination of the theory of $RN$ modules, measurable selection theorems and Banach space techniques. We may hope that this method can be applied in other related aspects.
\par
The remainder of this paper is organized as follows: in Section 2 of this paper we give some necessary preliminaries on $RN$ modules and further answer the problems $(1)$ and $(2)$ as above; in Section 3 we prove a general random fixed point theorem for a strong random nonexpansive operator on a Banach space as a better solution to Bharucha--Reid's problem.
\section{Some necessary preliminaries on $RN$ modules and positive answers to the problems $(1)$ and $(2)$}
The main results of this section are Theorems 2.11 and 2.16, let us first give some preliminaries.
\par
Throughout this paper, unless otherwise stated, $(\Omega,\mathcal{F},P)$ always denotes a given probability space; $K$ the scalar field $R$ of real numbers or $C$ of complex numbers; $L^0(\mathcal{F},K)$ the algebra of equivalence classes of $K$--valued $\mathcal{F}$--measurable random variables on $(\Omega,\mathcal{F},P)$, where the scalar multiplication, addition and multiplication operations on equivalence classes are, as usual, induced from the corresponding pointwise operations on random variables; $L^0(\mathcal{F})$ simply denotes $L^0(\mathcal{F},R)$ and $\bar{L}^0(\mathcal{F})$ the set of equivalence classes of extended real--valued $\mathcal{F}$--measurable random variables on $(\Omega,\mathcal{F},P)$.
\par
Proposition \ref{proposition2.1} below can be naturally regarded as a randomized version of the well known supremum or infimum principle for $\bar{R}:=[-\infty, +\infty]$ and $R$.

\begin{proposition}\cite{DS58}\label{proposition2.1}
When $\bar{L}^0(\mathcal{F})$ is partially ordered by $\xi \leq \eta$ if and only if $($briefly, iff$)$ $\xi^{0}(\omega) \leq \eta^{0}(\omega)$ for $P$--almost all $\omega$ in $\Omega$, where $\xi^{0}$ and $\eta^{0}$ are respectively arbitrarily chosen representatives of $\xi$ and $\eta$ in $\bar{L}^0(\mathcal{F})$, $(\bar{L}^{0}(\mathcal{F}),\leq)$ is a complete lattice. As usual, $\bigvee H$ and $\bigwedge H$ respectively stand for the supremum and infimum of a subset $H$ of $\bar{L}^0(\mathcal{F})$. Furthermore, the following hold:
\begin{enumerate}[(1)]
\item For every subset $H$ of $\bar{L}^0(\mathcal{F})$, there exist two sequences $\{ a_n, n \in N \}$ and $\{ b_n, n \in N\}$ in $H$ such that $\bigvee H = \bigvee_{n \geq 1} a_n $ and $\bigwedge H = \bigwedge_{n \geq 1} b_n $.
\item If $H$ is directed upwards ,namely there exists some $h_3\in H$ for any given two elements $h_1$ and $h_2$ in $H$  such that $h_1 \bigvee h_2 \leq  h_3$, then $\{a_n, n \in N\}$  in $(1)$ can be chosen as nondecreasing. Similarly, if $H$ is directed downwards, then $\{b_n, n\in N\}$ in $(1)$ can be chosen as nonincreasing.
\item $L^0(\mathcal{F})$, as a sublattice of $\bar{L}^0(\mathcal{F})$, is Dedekind complete, namely any subset having an upper bound in $L^0(\mathcal{F})$ must possess a supremum in $L^0(\mathcal{F})$.
\end{enumerate}
\end{proposition}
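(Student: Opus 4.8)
The plan is to reduce to the bounded case and then run the classical essential-supremum argument. First one checks that the relation $\le$ on $\bar{L}^0(\mathcal{F})$ is well defined independently of the chosen representatives, which is immediate from the countable additivity of $P$. Next, fix a strictly increasing bijection $\varphi:\bar{R}\to[0,1]$ (for instance a rescaled $\arctan$ on $R$ extended by $\varphi(\pm\infty)=0,1$); composing representatives with $\varphi$ induces an order isomorphism of $\bar{L}^0(\mathcal{F})$ onto the order interval $\{\,\xi\in L^0(\mathcal{F}):0\le\xi\le1\,\}$, so it suffices to prove (1) and (2) for a subset $H$ of this interval, and (3) will be read off afterwards.

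So let $H\subseteq L^0(\mathcal{F})$ with $0\le h\le1$ for every $h\in H$. I would introduce the family $\mathcal{G}$ of all elements of the form $h_1\bigvee\cdots\bigvee h_n$ with $h_i\in H$ and $n\in N$; then $\mathcal{G}$ is directed upwards and has exactly the same upper bounds as $H$ (since $H\subseteq\mathcal{G}$, and any upper bound of $H$ dominates every finite join of its members), so it is enough to exhibit $\bigvee\mathcal{G}$. Put $\alpha:=\sup\{\int_\Omega g\,dP: g\in\mathcal{G}\}\le 1$, choose $g_n\in\mathcal{G}$ with $\int_\Omega g_n\,dP\uparrow\alpha$, and, replacing $g_n$ by $g_1\bigvee\cdots\bigvee g_n$, assume $\{g_n\}$ nondecreasing; set $g^\ast:=\bigvee_{n\ge1}g_n$, the a.s.\ pointwise limit of the $g_n$. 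The key claim is that $g^\ast=\bigvee H$. Indeed, for any $h\in H$ the sequence $h\bigvee g_n$ lies in $\mathcal{G}$ and increases to $h\bigvee g^\ast$, so by the monotone convergence theorem $\int_\Omega(h\bigvee g^\ast)\,dP=\lim_n\int_\Omega(h\bigvee g_n)\,dP\le\alpha=\int_\Omega g^\ast\,dP$; since $h\bigvee g^\ast\ge g^\ast$ this forces $h\bigvee g^\ast=g^\ast$ a.s., i.e.\ $h\le g^\ast$. Thus $g^\ast$ is an upper bound of $H$, while any upper bound $u$ of $H$ dominates every $g_n$ (a finite join of elements of $H$) and hence dominates $g^\ast$; so $g^\ast=\bigvee H$. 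As each $g_n$ is a finite join of elements of $H$, the countably many elements of $H$ occurring in the $g_n$'s form the desired sequence $\{a_n\}$. The statement for $\bigwedge H$ follows dually via the order-reversing map $\xi\mapsto1-\xi$.

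For (2), start from the countable $\{c_k:k\in N\}\subseteq H$ just produced with $\bigvee_{k\ge1}c_k=\bigvee H$, and use that $H$ is directed upwards to pick recursively $a_1:=c_1$ and $a_{n+1}\in H$ with $a_n\bigvee c_{n+1}\le a_{n+1}$; then $\{a_n\}$ is nondecreasing, $a_n\le\bigvee H$ for each $n$, and $a_n\ge c_n$ gives $\bigvee_{n\ge1}a_n=\bigvee H$. The directed-downwards case is symmetric. Finally, (1) shows every subset of $\bar{L}^0(\mathcal{F})$ has a supremum and, dually, an infimum, and $\bar{L}^0(\mathcal{F})$ has greatest element $+\infty$ and least element $-\infty$, so it is a complete lattice; and if a nonempty $H\subseteq L^0(\mathcal{F})$ has an upper bound $u\in L^0(\mathcal{F})$, then fixing $h_0\in H$ one has $-\infty<h_0\le\bigvee H\le u<+\infty$ a.s., so $\bigvee H\in L^0(\mathcal{F})$ and it is plainly the supremum of $H$ inside $L^0(\mathcal{F})$, which proves (3).

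I expect the only real obstacle to be the key claim in (1), that the integral-maximizing limit $g^\ast$ is genuinely an order upper bound of $H$; this is precisely the point at which the directedness of $\mathcal{G}$ and the monotone convergence theorem are indispensable. Once that is in place, parts (2) and (3) and the complete-lattice assertion are routine bookkeeping.
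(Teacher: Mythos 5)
Your proof is correct. The paper itself offers no proof of Proposition 2.1 --- it is quoted as a known result from Dunford--Schwartz \cite{DS58} --- and your argument is precisely the standard one for that result: reduce to the order interval $[0,1]$ in $L^0(\mathcal{F})$ via a strictly increasing bijection $\varphi:\bar{R}\to[0,1]$, pass to the upward-directed family of finite joins, maximize the integral, and use monotone convergence to show the limit dominates every element of $H$; the handling of (2), (3) and the complete-lattice assertion is also sound (the only unstated edge case, the empty subset, is covered by the existence of the top and bottom elements $\pm\infty$).
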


As usual, for two elements $\xi$ and $\eta$ in $\bar{L}^0(\mathcal{F}), \xi < \eta$ means $\xi \leq \eta$ and $\xi \neq \eta$. Very often, for any $A\in \mathcal{F}, \xi < \eta$ on $A$ means $\xi^0(\omega) <\eta^0(\omega)$ for $P$--almost all $\omega$ in $A$, where  $\xi^0$ and $\eta^0$  are arbitrarily chosen representatives of $\xi$ and $\eta$, respectively. Let $A = \{ \omega \in \Omega ~:~ \xi^0(\omega) < \eta^0(\omega) \}$, we also use $(\xi <\eta)$ for $A$, which will not produce any confusion, since such sets $A$ only differ by a null set for respectively different choices of $\xi^0$ and $\eta^0$. Similarly, one can understand $(\xi \leq \eta)$, $(\xi \neq \eta)$, $(\xi =\eta)$ and so on.
\par
This paper always employs the following notations:
\par
$L^0_+(\mathcal{F}) := \{ \xi \in L^0(\mathcal{F}) ~:~ \xi \geq 0 \}$;
\par
$\bar{L}^0_+(\mathcal{F}) := \{ \xi \in \bar{L}^0(\mathcal{F}) ~:~ \xi \geq 0 \}$;
\par
$L^0_{++}(\mathcal{F}) := \{ \xi \in L^0(\mathcal{F}) ~:~ \xi >0$ on $\Omega \}$;
\par
$\bar{L}^0_{++}(\mathcal{F}) := \{ \xi \in \bar{L}^0(\mathcal{F}) ~:~ \xi >0$ on $\Omega \}$.

\par
In the study of applying the theory of $RN$ modules to the theory of random operators, one often has to distinguish a random variable from its equivalence class. Therefore, this paper also mentions an equivalent variant of Proposition \ref{proposition2.1} as follows:

\textbf{Proposition2.1$'$}\cite{HWY92}\label{Proposition2.1'}
Let $\bar{\mathcal{L}}(\mathcal{F})$ be the set of extended real--valued $\mathcal{F}$--random variables on $(\Omega,\mathcal{F},P)$. $\eta\in \bar{\mathcal{L}}(\mathcal{F})$ is called an essential upper bound for a subset $H$ of $\bar{\mathcal{L}}(\mathcal{F})$ if $h(\omega)\leq \eta(\omega)$ for $P$--almost all $\omega$ in $\Omega($ briefly, $h\leq \eta$ a.s.$)$ for any $h\in H$. Further, $\eta$ is called an essential supremun of $H$ if $\eta\leq \eta'$ a.s. for any essential upper bound $\eta'$ for $H$. Similarly, one has the notion of an essential lower bound or an essential infimum. Then every subset $H$ of $\bar{\mathcal{L}}(\mathcal{F})$ has an a.s. unique essential supremum and infimum, denoted by ess.sup$(H)$ and ess.inf$(H)$ respectively. Furthermore, $\bigvee H$ and $\bigwedge H$ have the properties $(1)$ to $(3)$ similar to Proposition \ref{proposition2.1}.

\begin{remark}\label{remark2.2}
Let $\mathcal{L}(\mathcal{F})$ be the set of real--valued $\mathcal{F}$--random variables on $(\Omega,\mathcal{F},P)$ and $H$ a subset of $\mathcal{L}(\mathcal{F})$. For a real--valued function $r:\Omega \rightarrow R$, if $h\leq r$ a.s. for any $h\in H$, then it is easy to see ess.sup$(H)\leq r$ a.s., so ess.sup$(H)$ can be taken as an element of $\mathcal{L}(\mathcal{F})$, where $h\leq r$ a.s. means that there is an $\mathcal{F}$--measurable $\Omega'$ with $P(\Omega')=1$ such that $h(\omega)\leq r(\omega)$ for any $\omega\in \Omega'$.
\end{remark}

\begin{definition}\cite{Guo92,Guo93,Guo10}\label{definition2.3}
An ordered pair $(E,\|\cdot\|)$ is called a random normed module $($briefly, an $RN$ module$)$ over $K$ with base $(\Omega, \mathcal{F},P)$ if $E$ is a left module over the algebra $L^0(\mathcal{F},K)$ and $\|\cdot\|$ is a mapping from $E$ to $L^0_+(\mathcal{F})$ such that the following hold:
\begin{enumerate}[(RN-1)]
\item $\|\xi  x \| = |\xi| \cdot \|x\|$ for any $\xi \in L^0(\mathcal{F},K) $ and $x\in E$, where $\xi x$ denotes the module multiplication of $\xi$ and $x$;
\item $\|x + y\| \leq \|x\| + \|y\|$ for any $x$ and $y \in E$;
\item $\|x\| = 0$ implies $x = \theta$ $($the null in $E$$)$.
\end{enumerate}
Here, $\|\cdot\|$ is often called the $L^0$--norm on $E$. If $\|\cdot\|$ only satisfies $(RN-1)$ and $(RN-2)$, then it is called an $L^0$--seminorm on $E$.
\end{definition}
\par
When $(\Omega, \mathcal{F},P)$ is trivial, namely $\mathcal{F}=\{\Omega,\emptyset\}, (E,\|\cdot\|)$ reduces to an ordinary normed space over $K$. Just as a norm induces the norm topology on a normed space, the $L^0$--norm on an $RN$ module $(E,\|\cdot\|)$ induces a metrizable linear topology on $E$, called the $(\varepsilon,\lambda)$--topology. The $(\varepsilon,\lambda)$--topology has its origin in the theory of probabilistic metric spaces, see \cite{SS83} for more historical backgrounds.

\begin{proposition}\cite{Guo92,Guo93,Guo10}\label{proposition2.4}
Let $(E,\|\cdot\|)$ be an $RN$ module over $K$ with base $(\Omega,\mathcal{F},P)$. For any given positive numbers $\varepsilon$ and $\lambda$  with $0 < \lambda <1$, let $N_{\theta}(\varepsilon,\lambda) = \{ x \in E ~:~ P\{\omega \in \Omega ~:~ \|x\|(\omega) < \varepsilon \} > 1-\lambda \}$, called the $(\varepsilon,\lambda)$--neighborhood of $\theta$, then $\{ N_{\theta}(\varepsilon,\lambda)~:~\varepsilon >0 ~and ~0 <\lambda <1 \}$ forms a local base for some metrizable linear topology on $E$, called the $(\varepsilon,\lambda)$--topology, denoted by $\mathcal{T}_{\varepsilon,\lambda}$. Further, $L^0(\mathcal{F},K)~($ as a special $RN$ module, its $L^0$--norm is just the usual absolute value mapping $|\cdot|)$ is a topological algebra over $K$ when $L^0(\mathcal{F},K)$ is endowed with its $(\varepsilon,\lambda)$--topology, and $(E,\mathcal{T}_{\varepsilon,\lambda})$ is a topological module over the topological algebra $L^0(\mathcal{F},K)$.
\end{proposition}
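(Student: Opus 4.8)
The plan is to verify, in order, three things: that the family $\mathcal{B}=\{N_\theta(\varepsilon,\lambda):\varepsilon>0,\ 0<\lambda<1\}$ is a local base at $\theta$ for a unique Hausdorff linear topology on $E$; that this topology is metrizable; and that the ring operations of $L^0(\mathcal{F},K)$ together with the module action $L^0(\mathcal{F},K)\times E\to E$ are continuous for the resulting $(\varepsilon,\lambda)$--topologies.

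For the first point I would apply the usual criterion for linear (vector) topologies. The family $\mathcal{B}$ is a filter base because $N_\theta(\varepsilon_1\wedge\varepsilon_2,\lambda_1\wedge\lambda_2)\subseteq N_\theta(\varepsilon_1,\lambda_1)\cap N_\theta(\varepsilon_2,\lambda_2)$. Each $N_\theta(\varepsilon,\lambda)$ is balanced, since by (RN-1) $\|ax\|=|a|\,\|x\|\le\|x\|$ whenever $|a|\le1$, and absorbing, since for fixed $x$ one has $P(\|x\|<\varepsilon/|s|)\to1$ as the scalar $s\to0$ (because $\|x\|\in L^0_+(\mathcal{F})$ is a.s.\ finite), so $sx\in N_\theta(\varepsilon,\lambda)$ for $|s|$ small. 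The key ``triangle'' step is $N_\theta(\varepsilon/2,\lambda/2)+N_\theta(\varepsilon/2,\lambda/2)\subseteq N_\theta(\varepsilon,\lambda)$: if $P(\|x\|<\varepsilon/2)>1-\lambda/2$ and $P(\|y\|<\varepsilon/2)>1-\lambda/2$, then on the intersection of these two events, of probability $>1-\lambda$, (RN-2) gives $\|x+y\|<\varepsilon$. The criterion then yields a unique linear topology $\mathcal{T}_{\varepsilon,\lambda}$ with $\mathcal{B}$ as a local base at $\theta$. Hausdorffness comes from (RN-3): if $x\ne\theta$ then $P(\|x\|>0)=\delta>0$, so $P(\|x\|\ge1/n)>\delta/2$ for $n$ large, whence $x\notin N_\theta(1/n,\delta/2)$; thus $\bigcap\mathcal{B}=\{\theta\}$.

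Metrizability is then immediate: $\{N_\theta(1/n,1/n):n\ge1\}$ is a countable local base at $\theta$, so $(E,\mathcal{T}_{\varepsilon,\lambda})$ is a first-countable Hausdorff topological abelian group under addition, and the Birkhoff--Kakutani theorem furnishes a translation-invariant metric inducing $\mathcal{T}_{\varepsilon,\lambda}$.

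It remains to treat the algebraic operations. That $(L^0(\mathcal{F},K),|\cdot|)$ satisfies (RN-1)--(RN-3) is routine, so it is itself an $RN$ module and carries an $(\varepsilon,\lambda)$--topology, and continuity of addition and subtraction on it, as on $E$, is part of the topological-group structure already obtained. The substance is the joint continuity of the module action at a point $(\xi_0,x_0)$. I would write $\xi x-\xi_0 x_0=(\xi-\xi_0)(x-x_0)+\xi_0(x-x_0)+(\xi-\xi_0)x_0$ and, via (RN-1)--(RN-2), reduce to placing each of the three terms in $N_\theta(\varepsilon/3,\lambda/3)$. The first term, of $L^0$--norm $|\xi-\xi_0|\,\|x-x_0\|$, is controlled by requiring $|\xi-\xi_0|<1$ and $\|x-x_0\|<\varepsilon/3$, each on a set of probability $>1-\lambda/6$. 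The delicate terms are $\|\xi_0(x-x_0)\|=|\xi_0|\,\|x-x_0\|$ and $\|(\xi-\xi_0)x_0\|=|\xi-\xi_0|\,\|x_0\|$, in which the fixed factors $\xi_0$ and $\|x_0\|$ are unbounded; here one truncates, using a.s.\ finiteness to pick $M>0$ with $P(|\xi_0|>M)<\lambda/6$ and $P(\|x_0\|>M)<\lambda/6$, and then imposes $\|x-x_0\|<\varepsilon/(3M)$ and $|\xi-\xi_0|<\varepsilon/(3M)$, each on a set of probability $>1-\lambda/6$; intersecting all the events forces $\|\xi x-\xi_0 x_0\|<\varepsilon$ on a set of probability $>1-\lambda$. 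This truncation device, needed precisely because the ``scalars'' now range over a ring without a norm rather than over the normed field $K$, is the one step that is not purely formal, and it is the point I expect to require care; taking $E=L^0(\mathcal{F},K)$ in the same computation gives continuity of multiplication, so $L^0(\mathcal{F},K)$ is a topological algebra over $K$ and $(E,\mathcal{T}_{\varepsilon,\lambda})$ a topological module over it.
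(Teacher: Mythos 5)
Your proof is correct and follows the standard argument: the paper itself states Proposition 2.4 as a cited preliminary (from \cite{Guo92,Guo93,Guo10}) without proof, and your verification of the neighborhood-base axioms, the Birkhoff--Kakutani metrizability step, and the truncation device for the joint continuity of the module multiplication is exactly how this result is established in those sources. No gaps.
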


\begin{example}\label{example2.5}
Let $(B,\|\cdot\|)$ be a Banach space over $K$ and $L^0(\mathcal{F},B)$ the linear space of equivalence classes of $B$--valued strong $\mathcal{F}$--random elements on $(\Omega,\mathcal{F},P)$ under the ordinary scalar multiplication and addition operations on equivalence classes. Then the scalar multiplication on $B$ induces the module multiplication on $L^0(\mathcal{F},B)$ as follows: $\xi x:=$ the equivalence class of $\xi^0(\cdot)\cdot x^0(\cdot)$ for any $(\xi,x) \in L^0(\mathcal{F},K) \times L^0(\mathcal{F},B)$, where $\xi^0$ and $x^0$ are respectively arbitrarily chosen representatives of $\xi$ and $x$. Further the norm $\|\cdot\|$ on $B$ induces the $L^0$--norm on $L^0(\mathcal{F},B)$, still denoted by $\|\cdot\|$ as follows:\\
$\|x\|=$the equivalence class of $\|x^0(\cdot)\|$ for any $x\in L^0(\mathcal{F},B)$, where $x^0$ is as above.\\
Then $(L^0(\mathcal{F},B),\|\cdot\|)$ becomes an $\mathcal{T}_{\varepsilon,\lambda}$--complete $RN$ module over $K$ with base $(\Omega,\mathcal{F},P)$. In particular the $(\varepsilon,\lambda)$--topology $\mathcal{T}_{\varepsilon,\lambda}$ on $L^0(\mathcal{F},B)$ is just the topology of convergence in probability on $L^0(\mathcal{F},B)$.
\end{example}

\par
To give Example \ref{example2.6} below, let us first recall from \cite{B72} the notion of a $w^*$--random element. Let $B'$ be the conjugate space of a Banach space $(B,\|\cdot\|)$ over $K$, a mapping $V: (\Omega,\mathcal{F},P) \rightarrow B'$ is called a $w^*$--$\mathcal{F}$--random element if $V(\cdot)(b): \Omega \rightarrow K$ is a $K$--valued $\mathcal{F}$--random variable on $(\Omega,\mathcal{F},P)$ for any $b\in B$. Furthermore, two $w^*$--$\mathcal{F}$--random elements $V_1$ and $V_2: (\Omega,\mathcal{F},P) \rightarrow B'$  are said to be $w^*$--equivalent if $V_1(\cdot)(b)=V_2(\cdot)(b)$ a.s. for any $b\in B$.

\begin{example}\label{example2.6}
Let $L^0(\mathcal{F},B',w^*)$ be the linear space of $w^*$--equivalence classes of $w^*$--$\mathcal{F}$--random elements from $(\Omega,\mathcal{F},P)$ to $B'$ under the usual scalar multiplication and addition operations on $w^*$--equivalence classes. Similarly to Example \ref{example2.5}, $L^0(\mathcal{F},B',w^*)$ is a left module over the algebra $L^0(\mathcal{F},K)$ under the module multiplication induced from the scalar multiplication on $B'$, defined $\|\cdot\|: L^0(\mathcal{F},B',w^*)\rightarrow L_+^0(\mathcal{F})$ as follows:\\
$\|x\|=$ the equivalence class of $ess.sup(\{|x^0(\cdot)(b)|: b\in B$ and $\|b\|\leq 1\})$ for any $x\in L^0(\mathcal{F},B',w^*)$, where $x^0(\cdot)$ is an arbitrarily chosen representative of $x$. Then $(L^0(\mathcal{F},B',w^*),\|\cdot\|)$ becomes an $\mathcal{T}_{\varepsilon,\lambda}$--complete $RN$ module over $K$ with base $(\Omega,\mathcal{F},P)$. Notice: let $r:\Omega \rightarrow R$ be defined by $r(\omega)=\|x^0(\omega)\|:=sup\{|x^0(\omega)(b)|: b\in B$ and $\|b\|\leq 1\}$, then $|x^0(\cdot)(b)|\leq r$ a.s. for any $b\in B$ and $\|b\|\leq 1$, although $r$ is not necessarily $\mathcal{F}$--measurable, $ess.sup(\{|x^0(\cdot)(b)|: b\in B$ and $\|b\|\leq 1\})$ always belongs to $\mathcal{L}(\mathcal{F})$, so $\|x\|$ is well defined.
\end{example}

\par
The $(\varepsilon,\lambda)$--topology is essentially not locally convex, for example, $L^0(\mathcal{F},K)$ is the simplest $RN$ module, but there does not any nontrivial continuous linear functional on $L^0(\mathcal{F},K)$ when $\mathcal{F}$ does not any atom. The following notion of a random conjugate space is crucial in the subsequent development of $RN$ modules.

\begin{definition}\label{definition2.7}\cite{Guo89,Guo10}
Let $(E,\|\cdot\|)$ be an $RN$ module over $K$ with base $(\Omega,\mathcal{F},P)$. A linear operator $f : E \to L^0(\mathcal{F},K)$ is said to be a.s. bounded if there exists some $\xi \in L^0_+(\mathcal{F})$ such that $|f(x)| \leq \xi \cdot \|x\|$ for any $x \in E$. Denote by $E^*$ the linear space of a.s. bounded linear operators from $E$ to $L^0(\mathcal{F},K)$, a module multiplication $\cdot: L^0(\mathcal{F},K)\times E^*\rightarrow E^*$ is introduced by $(\xi\cdot f)(x)=\xi\cdot (f(x))$ for any $(\xi,f)\in L^0(\mathcal{F},K)\times E^*$, and an $L^0$--norm $\|\cdot\|: E^*\rightarrow L^0_+(\mathcal{F})$ is given by $\|f\|=\bigwedge\{\xi\in L^0_+(\mathcal{F}): |f(x)|\leq \xi\cdot\|x\|$ for any $x\in E\}$ for any $f\in E^*$, then $(E^*,\|\cdot\|)$ becomes an $RN$ module over $K$ with base $(\Omega,\mathcal{F},P)$, called the random conjugate space of $E$.
\end{definition}

\par
Now, it is also well known that $f\in E^*$ iff $f$ is a continuous module homomorphism from $(E,\mathcal{T}_{\varepsilon,\lambda})$ to $(L^0(\mathcal{F},K),\mathcal{T}_{\varepsilon,\lambda})$, at which time $\|f\| = \bigvee \{ |f(x)| : x \in E$ and $\|x\| \leq 1 \}$, see \cite{Guo95,Guo10,Guo13} for details.

\begin{lemma}\cite{Guo96a}\label{lemma2.8}
Let $(\Omega,\mathcal{F},P)$ be a complete probability space and $(B,\|\cdot\|)$ a Banach space over $K$. Define the canonical mapping $J:L^0(\mathcal{F},B',w^*)\rightarrow L^0(\mathcal{F},B)^*$ as follows: for each $y\in L^0(\mathcal{F},B',w^*)$, $J(y): L^0(\mathcal{F},B)\rightarrow L^0(\mathcal{F},K)$ is given by $J(y)(x)=\langle x,y \rangle$ for each $x \in L^0(\mathcal{F},B)$, where $\langle x, y \rangle$ is the equivalence class $\langle x^0(\cdot),y^0(\cdot)\rangle$ defined by $\langle x^0(\omega),y^0(\omega)\rangle=y^0(\omega)(x^0(\omega))$ for each $\omega\in \Omega$, $x^0$ and $y^0$ are a representative of $x\in L^0(\mathcal{F},B)$ and $y\in L^0(\mathcal{F},B',w^*)$, respectively. Then $J$ is an isometric isomorphism from $L^0(\mathcal{F},B',w^*)$ onto $L^0(\mathcal{F},B)^*($in the sense of $RN$ modules$)$.
\end{lemma}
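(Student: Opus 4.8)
The plan is to check, in order, that $J$ is a well--defined $L^0(\mathcal{F},K)$--module homomorphism from $L^0(\mathcal{F},B',w^*)$ into $L^0(\mathcal{F},B)^*$, that it preserves the $L^0$--norm (whence it is injective), and finally that it is surjective; only the last point is substantial. Throughout I write $\widehat{b}\in L^0(\mathcal{F},B)$ for the equivalence class of the constant random element $\omega\mapsto b$, and I use repeatedly that every $x\in L^0(\mathcal{F},B)$ has a representative $x^0$ with separable range in $B$.

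The well--definedness and module parts rest on one device: a family of $P$--null sets indexed by $b\in B$ collapses to a single one because of separability. Given $x$ with representative $x^0$ of range $S$, fix a countable dense $D\subseteq S$; if $y^0,\widetilde{y}^0$ are $w^*$--equivalent representatives of $y\in L^0(\mathcal{F},B',w^*)$, intersect the countably many $P$--null sets off which $y^0(\cdot)(d)=\widetilde{y}^0(\cdot)(d)$ for $d\in D$ and off which $y^0(\omega),\widetilde{y}^0(\omega)\in B'$, and then continuity of these functionals on $\overline{D}\supseteq S$ gives $\langle x^0(\cdot),y^0(\cdot)\rangle=\langle x^0(\cdot),\widetilde{y}^0(\cdot)\rangle$ a.s.; a change of representative of $x$ is trivial, and $\mathcal{F}$--measurability of $\omega\mapsto y^0(\omega)(x^0(\omega))$ follows by approximating $x^0$ pointwise by simple random elements and using that $y^0(\cdot)(b)$ is measurable for each fixed $b$. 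Linearity of $J(y)$ over $L^0(\mathcal{F},K)$, and of $J$ itself, is then immediate from the pointwise formulas and the a.s.\ linearity of $y^0(\omega)$. For the norm identity: if $\|x\|\leq1$, take $D$ countable dense in the (essentially unit--ball--valued) range of $x^0$; each $|y^0(\cdot)(d)|\leq\|y\|$ a.s.\ straight from the definition of the $L^0$--norm on $L^0(\mathcal{F},B',w^*)$ as an essential supremum over the unit ball of $B$, so intersecting and taking limits along $D$ gives $|J(y)(x)|\leq\|y\|$ a.s., hence $\|J(y)\|\leq\|y\|$ (in particular $J(y)\in L^0(\mathcal{F},B)^*$); conversely, testing on $\widehat{b}$ with $\|b\|\leq1$ gives $|y^0(\cdot)(b)|=|J(y)(\widehat{b})|\leq\|J(y)\|$ a.s., and since $\|y\|$ is the least essential upper bound of the left--hand sides, $\|y\|\leq\|J(y)\|$. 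Thus $\|J(y)\|=\|y\|$ and $J$ is injective.

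The heart of the proof is surjectivity. Let $f\in L^0(\mathcal{F},B)^*$. For each $b\in B$ we have $f(\widehat{b})\in L^0(\mathcal{F},K)$ with $|f(\widehat{b})|\leq\|f\|\cdot\|b\|$ a.s., and $b\mapsto f(\widehat{b})$ is $K$--linear; the task is to regularize this into a genuine mapping $y^0\colon\Omega\to B'$, that is, to choose representatives of the classes $f(\widehat{b})$ coherently enough that $b\mapsto y^0(\omega)(b)$ is a bounded linear functional on $B$ for \emph{every} $\omega$ — not merely for $\omega$ outside a $b$--dependent null set. This is precisely where completeness of $(\Omega,\mathcal{F},P)$ enters. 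I would fix a lifting $\rho$ of $L^0(\mathcal{F})$ (which exists because the probability space is complete) and set $y^0(\omega)(b):=\rho(f(\widehat{b}))(\omega)$. Since $\rho$ is linear, positively homogeneous, order--preserving and picks a representative of each class, $b\mapsto y^0(\omega)(b)$ is linear for every $\omega$ and is bounded for every $\omega$, with $L^0$--norm controlled by $\rho(\|f\|)(\omega)$, so $y^0(\omega)\in B'$ for every $\omega$; also $\omega\mapsto y^0(\omega)(b)=\rho(f(\widehat{b}))(\omega)$ is $\mathcal{F}$--measurable, so $y^0$ is a $w^*$--$\mathcal{F}$--random element and its class $y\in L^0(\mathcal{F},B',w^*)$ satisfies $\langle\widehat{b},y\rangle=f(\widehat{b})$ for all $b$. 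Since $J(y)$ and $f$ are continuous module homomorphisms on $L^0(\mathcal{F},B)$ that agree on every $\widehat{b}$, they agree on the $L^0(\mathcal{F},K)$--submodule generated by $\{\widehat{b}:b\in B\}$, which contains all simple random elements, and hence, by density of the latter in $(L^0(\mathcal{F},B),\mathcal{T}_{\varepsilon,\lambda})$ together with continuity, on all of $L^0(\mathcal{F},B)$; therefore $J(y)=f$.

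I expect this regularization to be the only genuine obstacle. When $B$ is separable it can be carried out by hand instead: choose representatives of $f(\widehat{c})$ for $c$ in a countable $\mathbb{Q}$-- (or $(\mathbb{Q}+i\mathbb{Q})$--) linear dense subset of $B$, intersect the countably many null sets on which $\mathbb{Q}$--linearity and the estimate $|f(\widehat{c})|\leq\|f\|\cdot\|c\|$ hold, extend by continuity to an element of $B'$ on the resulting set of full measure, set the functional to $0$ off that set, and invoke completeness of $\mathcal{F}$ to preserve measurability; one then checks $J(y)=f$ exactly as above. For general, possibly non--separable, $B$ the lifting is the natural way to make the uncountably many representative choices simultaneously, and I would look to \cite{Guo96a} for whether the argument there follows this route or a more hands--on transfinite one.
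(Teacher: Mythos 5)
The paper does not actually prove Lemma \ref{lemma2.8}: it is imported verbatim from the reference \cite{Guo96a}, so there is no in--paper argument to compare yours against line by line. Judged on its own terms, your outline is sound and covers the right points: the collapse of $b$--indexed null sets via a countable dense subset of the (separable) range of $x^0$ handles well--definedness and the inequality $\|J(y)\|\leq\|y\|$, testing against the constant classes $\widehat{b}$ gives the reverse inequality, and the density of simple random elements in $(L^0(\mathcal{F},B),\mathcal{T}_{\varepsilon,\lambda})$ together with the module--homomorphism property correctly reduces surjectivity to matching $f$ on the $\widehat{b}$'s.

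The one step you should not leave as stated is the invocation of ``a lifting $\rho$ of $L^0(\mathcal{F})$.'' The lifting theorem for a complete probability space (von Neumann--Maharam--Ionescu Tulcea) produces a linear, positive, multiplicative lifting of $L^\infty(\mathcal{F})$, not of $L^0(\mathcal{F})$; a positive linear selection of representatives on all of $L^0$ is not a standard object and its existence should not be taken for granted. The repair is routine but must be made explicit: since $|f(\widehat{b})|\leq\|f\|\cdot\|b\|$, the classes $g_b:=(1+\|f\|)^{-1}f(\widehat{b})$ lie in $L^\infty(\mathcal{F},K)$ with $\|g_b\|_\infty\leq\|b\|$, so one applies the $L^\infty$ lifting to $b\mapsto g_b$ to obtain $z^0(\omega)\in B'$ with $\|z^0(\omega)\|\leq 1$ for every $\omega$, and then sets $y^0(\omega):=(1+\|f\|^0(\omega))\,z^0(\omega)$ for a fixed representative $\|f\|^0$ of $\|f\|$ (equivalently, one stratifies over the sets $(n-1\leq\|f\|<n)$). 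With that correction your argument is complete. For what it is worth, the proof in \cite{Guo96a} reaches the same regularization by reducing to the classical representation $(L^1(P,B))'\cong L^\infty(P,B',w^*)$, which itself rests on the $L^\infty$ lifting theorem, so your route is essentially a more direct version of the same idea rather than a genuinely different one.
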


\begin{definition}\cite{GZWW17}\label{definition2.9}
Let $E$ be a topological module over the topological algebra $(L^0(\mathcal{F},K) \\
,\mathcal{T}_{\varepsilon,\lambda})$. A subset $G$ of $E$ is said to be $L^0$--convex if $\xi x+\eta y\in G$ for any $x$ and $y\in G$ and $\xi$ and $\eta\in L^0_+(\mathcal{F})$ such that $\xi+ \eta=1$. Further, an nonempty $L^0$--convex subset $G$ of $E$ is said to be $L^0$--convexly compact$($ or, to have $L^0$--convex compactness$)$ if any family of nonempty closed $L^0$--convex subsets of $G$ has a nonempty intersection whenever the family has the finite intersection property$($ namely each of its finite subfamily has a nonempty intersection$)$.
\end{definition}

\par
Throughout this paper, since every $RN$ module is always endowed with the $(\varepsilon,\lambda)$--topology, all topological terminologies are with respect to the $(\varepsilon,\lambda)$--topology, so we no longer mention the $(\varepsilon,\lambda)$--topology in the sequel of this paper. An $RN$ module $(E,\|\cdot\|)$ over $K$ with base $(\Omega,\mathcal{F},P)$ ia a topological module over the topological algebra $L^0(\mathcal{F},K)$, in particular, we have the following:

\begin{proposition}\cite[Theorem 2.21]{GZWW17}\label{proposition2.10}
Let $(E,\|\cdot\|)$ be a complete $RN$ module over $K$ with base $(\Omega,\mathcal{F},P)$ and $G$ a closed $L^0$--convex subset of $E$. Then $G$ is $L^0$--convex compact if and if for each $f\in E^*$ there exists $g_0\in G$ such that $Re(f(g_0))=\bigvee\{Re(f(g)): g\in G\}$, where $Re(f(g))$ stands for the real part of $f(g)$.
\end{proposition}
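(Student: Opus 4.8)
The plan is to establish the two implications separately. The implication ``$G$ is $L^0$-convexly compact $\Rightarrow$ every $f\in E^*$ attains its supremum on $G$'' is a short finite-intersection-property argument; the converse is the substantial part and is, in effect, a randomization of James' theorem.

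For the forward direction, fix $f\in E^*$ and put $H:=\{Re(f(g)):g\in G\}\subseteq L^0(\mathcal F)$, $\xi:=\bigvee H\in\bar L^0(\mathcal F)$. I would first note that $H$ is directed upwards: given $g_1,g_2\in G$, let $A:=(Re(f(g_1))\geq Re(f(g_2)))$ and let $\tilde I_A\in L^0_+(\mathcal F)$ be the equivalence class of the indicator function of $A$; then $g_3:=\tilde I_Ag_1+(1-\tilde I_A)g_2\in G$ by $L^0$-convexity, and since $Re\circ f$ is additive and $L^0_+$-homogeneous we get $Re(f(g_3))=Re(f(g_1))\vee Re(f(g_2))$. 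By Proposition \ref{proposition2.1}(2) there is then a sequence $\{g_n\}\subseteq G$ with $Re(f(g_n))$ nondecreasing and $\bigvee_n Re(f(g_n))=\xi$. Set $C_n:=\{h\in G:Re(f(h))\geq Re(f(g_n))\}$: each $C_n$ is nonempty (it contains $g_m$ for $m\geq n$), closed (as $Re\circ f$ is continuous), $L^0$-convex (again because $Re\circ f$ is additive and $L^0_+$-homogeneous), and $C_{n+1}\subseteq C_n$, so $\{C_n:n\in N\}$ has the finite intersection property. $L^0$-convex compactness of $G$ gives some $g_0\in\bigcap_n C_n$, and then $Re(f(g_0))\geq\bigvee_n Re(f(g_n))=\xi\geq Re(f(g_0))$, i.e.\ $Re(f(g_0))=\xi$; in particular $\xi\in L^0(\mathcal F)$ and is attained.

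For the converse, assume that for each $f\in E^*$ the supremum $\bigvee\{Re(f(g)):g\in G\}$ lies in $L^0(\mathcal F)$ and is attained on $G$. I would argue in three stages. (i) $G$ is $L^0$-bounded, i.e.\ $\bigvee\{\|g\|:g\in G\}\in L^0_+(\mathcal F)$; otherwise, picking $g_n\in G$ with $\|g_n\|$ nondecreasing to something $+\infty$ on a set of positive measure, together with $f_n\in E^*$, $\|f_n\|\leq 1$, almost norming $g_n$ (a random Hahn--Banach step), one assembles a suitably summed random series $f=\sum_n c_nf_n\in E^*$ whose supremum over $G$ is $+\infty$ on a positive-measure set, a contradiction. (ii) By the module Hahn--Banach separation theorem in complete $RN$ modules, a closed $L^0$-convex subset of $E$ is precisely an intersection of ``$L^0$-half-spaces'' $\{x:Re(h(x))\leq\eta\}$ with $h\in E^*$, $\eta\in L^0(\mathcal F)$; hence ``closed and $L^0$-convex'' coincides with ``closed for the $E^*$-induced module topology'', and it is enough to show $G$ is compact in that sense, i.e.\ that every family of such $L^0$-convex sets with the finite intersection property intersects. (iii) Embed $G$ canonically into the random bidual $E^{**}$; by a random Banach--Alaoglu argument the closure $\widetilde G$ of $G$ in $E^{**}$ (for the $\sigma(E^{**},E^*)$-type topology) is $L^0$-convexly compact, and the attainment hypothesis, used in the manner of James' theorem, forces $\widetilde G\subseteq E$, so $\widetilde G=G$ and $G$ is $L^0$-convexly compact.

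The main obstacle is stage (iii): the ``random James'' content that attainment of every $f\in E^*$ on the closed $L^0$-convex set $G$ forces its bidual closure back into $E$. I expect this to require either a faithful transcription of a known proof of James' theorem into the $L^0$-module setting --- replacing every real supremum by a $\bar L^0$-supremum via Proposition \ref{proposition2.1}, every scalar by an element of $L^0(\mathcal F)$, and using the gluing/countable-concatenation stability of $L^0$-convex sets at each step --- or a reduction to the classical James' theorem via an appropriate scalarization; in either case the real work is the bookkeeping that keeps all the estimates $L^0$-valued and all the chosen objects $\mathcal F$-measurable, while stages (i)--(ii) are routine once the standard $RN$-module duality tools are available.
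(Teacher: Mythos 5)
This proposition is not proved in the paper at all: it is quoted verbatim from Theorem 2.21 of the reference \cite{GZWW17} (the random generalization of James' theorem), so there is no in--paper argument to compare yours against. Judged on its own terms, your forward direction is correct and complete: the observation that $\{Re(f(g)):g\in G\}$ is directed upwards via the gluing $g_3=\tilde I_Ag_1+(1-\tilde I_A)g_2$, the extraction of a nondecreasing sequence by Proposition \ref{proposition2.1}(2), and the finite--intersection argument with the closed $L^0$--convex sets $C_n=\{h\in G: Re(f(h))\geq Re(f(g_n))\}$ together give attainment (and, as a by--product, that the supremum lies in $L^0(\mathcal F)$). This is the standard easy half and there is nothing to object to.

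The converse, however, is where essentially all of the content of the proposition lives, and you have not proved it: your stage (iii) is an explicit placeholder for the random James theorem itself. Saying that the attainment hypothesis, ``used in the manner of James' theorem, forces $\widetilde G\subseteq E$'' restates the theorem rather than proving it, and the classical James theorem is notoriously delicate even in the scalar setting; neither of the two routes you gesture at (a line--by--line $L^0$--valued transcription of a known proof, or a reduction to the classical theorem by scalarization) is carried out or even reduced to a precise lemma. Stages (i) and (ii) also lean on unproved infrastructure --- a random uniform--boundedness/norming--functional construction for (i), and the claim in (ii) that closed $L^0$--convex sets are exactly intersections of $L^0$--half--spaces, which in the $(\varepsilon,\lambda)$--topology requires the separation theory for stable sets from the random convex analysis literature --- but these are at least plausibly routine given the known $RN$--module toolbox. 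The decisive gap is (iii). For what it is worth, the actual proof in \cite{GZWW17} proceeds by a reduction to the classical James theorem (roughly, by passing to associated Banach spaces of the form $L^p(E)$ and showing that $L^0$--convex compactness of $G$ corresponds to weak compactness there), i.e.\ the second of your two suggested routes; but the reduction lemma that makes this work is precisely the nontrivial step your proposal leaves open.
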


Proposition \ref{proposition2.10} is, in fact, a random generalization of the famous James' theorem \cite{J64}, which shows that the notion of $L^0$--convex compactness for a complete $RN$ module plays the same role as weak compactness for a Banach space, Proposition \ref{proposition2.10} directly leads to a positive answer to Problem $(1)$ in Introduction of this paper, namely Theorem \ref{theorem2.11} below:

\begin{theorem}\label{theorem2.11}
Let $(B,\|\cdot\|)$ be a Banach space over $K$ and $V$ a closed convex subset of $B$. Then $L^0(\mathcal{F},V)$ is an $L^0$--convexly compact subset of the complete $RN$ module $L^0(\mathcal{F},B)$ if and only if $V$ is weakly compact. Where $L^0(\mathcal{F},V)$ is the set of equivalence classes of $V$--valued strong $\mathcal{F}$--random elements on $(\Omega,\mathcal{F},P)$, it is clear that $L^0(\mathcal{F},V)$ is a closed $L^0$--convex subset of $L^0(\mathcal{F},B)$.
\end{theorem}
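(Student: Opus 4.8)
The plan is to derive both implications from the random James' theorem, Proposition~\ref{proposition2.10}: a closed $L^0$--convex subset $G$ of a complete $RN$ module $E$ is $L^0$--convexly compact if and only if for each $f\in E^*$ there is $g_0\in G$ with $Re(f(g_0))=\bigvee\{Re(f(g)):g\in G\}$. Since passing to the completion of $(\Omega,\mathcal F,P)$ changes neither $L^0(\mathcal F,B)$, $L^0(\mathcal F,V)$ nor the notion of $L^0$--convex compactness, I may assume $(\Omega,\mathcal F,P)$ is complete, so that Lemma~\ref{lemma2.8} applies and every $f\in L^0(\mathcal F,B)^*$ has the form $f(x)=\langle x,y\rangle$ for a unique $y\in L^0(\mathcal F,B',w^*)$; fixing a representative $y^0:\Omega\to B'$, the class $Re(f(x))$ is represented by $\omega\mapsto Re\big(y^0(\omega)(x^0(\omega))\big)$. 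This fiberwise description is what ties the $L^0$--theory to Banach space theory.

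For the \emph{if} part, let $V$ be weakly compact (hence norm bounded) and fix $f=J(y)\in L^0(\mathcal F,B)^*$. The family $\{Re(f(g)):g\in L^0(\mathcal F,V)\}$ is directed upwards (replace $g_1,g_2$ by $1_Ag_1+1_{A^c}g_2$ with $A=(Re f(g_1)\ge Re f(g_2))$), so by Proposition~\ref{proposition2.1} there are $g_n\in L^0(\mathcal F,V)$ with $\bigvee_n Re(f(g_n))=\bigvee\{Re(f(g)):g\in L^0(\mathcal F,V)\}$. Choosing representatives $g_n^0$ valued in $V$ and putting $V_0:=\overline{\mathrm{co}}\,\big(\bigcup_n g_n^0(\Omega)\big)$ (norm closure), $V_0\subseteq V$ is norm--separable, norm--closed and convex, hence weakly compact, and $\bigvee\{Re(f(g)):g\in L^0(\mathcal F,V_0)\}=\bigvee\{Re(f(g)):g\in L^0(\mathcal F,V)\}=:\zeta_0$. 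Because $V_0$ is norm--separable and weakly compact, $(V_0,w)$ is a compact metrizable, hence Polish, space; moreover $v\mapsto Re(y^0(\omega)(v))$ is weakly continuous on $V_0$ for each $\omega$ while $\omega\mapsto Re(y^0(\omega)(v))$ is $\mathcal F$--measurable for each $v$, so this map is jointly $\mathcal F\otimes\mathcal B((V_0,w))$--measurable, $\phi(\omega):=\sup_{v\in V_0}Re(y^0(\omega)(v))$ is $\mathcal F$--measurable (a countable supremum over a dense subset), and $\Gamma(\omega):=\{v\in V_0:Re(y^0(\omega)(v))=\phi(\omega)\}$ has nonempty weakly compact values and measurable graph. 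A measurable selection theorem (von Neumann--Aumann, using completeness of $(\Omega,\mathcal F,P)$; see \cite{W77}) gives a measurable $g_0:\Omega\to(V_0,w)$ with $g_0(\omega)\in\Gamma(\omega)$; since the weak and norm Borel structures coincide on the separable Banach space $\overline{\mathrm{span}}(V_0)$, $g_0$ is a strong random element with range in $V_0$, so its class lies in $L^0(\mathcal F,V_0)\subseteq L^0(\mathcal F,V)$ and $Re(f(g_0))$ is represented by $\phi$. Combined with $\zeta_0\le\text{class of }\phi$, this yields $Re(f(g_0))=\zeta_0$, and Proposition~\ref{proposition2.10} finishes this direction.

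For the \emph{only if} part, assume $L^0(\mathcal F,V)$ is $L^0$--convexly compact. Each $f\in B'$ induces $\widetilde f\in L^0(\mathcal F,B)^*$ by $\widetilde f(x)=$ class of $\omega\mapsto f(x^0(\omega))$, with $|\widetilde f(x)|\le\|f\|\cdot\|x\|$. First, $V$ is bounded: otherwise there is $f\in B'$ with $\sup_{v\in V}Re f(v)=+\infty$, and evaluating $\widetilde f$ on the constant classes $\widehat{v}$, $v\in V$, forces $\bigvee\{Re(\widetilde f(g)):g\in L^0(\mathcal F,V)\}=+\infty$, which no $g_0\in L^0(\mathcal F,V)$ can attain, contradicting Proposition~\ref{proposition2.10}. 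With $V$ bounded, fix $f\in B'$ and take $g_0\in L^0(\mathcal F,V)$ with $Re(\widetilde f(g_0))=\bigvee\{Re(\widetilde f(g)):g\in L^0(\mathcal F,V)\}$; testing against the constants $\widehat{v}$ gives this class $\ge\sup_{v\in V}Re f(v)=:s\in R$, while $Re(\widetilde f(g_0))(\omega)=Re f(g_0^0(\omega))\le s$ a.e., whence $Re f(g_0^0(\omega))=s$ for a.e. $\omega$; any such $\omega$ supplies a point of $V$ at which $Re f$ attains $s$. Thus every $f\in B'$ attains its supremum on the bounded closed convex set $V$, and James' theorem \cite{J64} gives that $V$ is weakly compact.

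The main obstacle is the construction in the \emph{if} part: one must convert the abstract $L^0$--supremum into a genuine pointwise maximization over a weakly compact set and then extract a maximizer that is again a strong random element. The reduction to a norm--separable $V_0$ (making $(V_0,w)$ Polish), the joint measurability of $(\omega,v)\mapsto Re(y^0(\omega)(v))$, and the measurability of the argmax correspondence $\Gamma$ are precisely what render the measurable selection theorems applicable; and the coincidence of the weak and norm Borel $\sigma$--algebras on a separable Banach space is what upgrades the a priori only weakly measurable selector to a strong random element, so that it actually defines an element of $L^0(\mathcal F,V)$.
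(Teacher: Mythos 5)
Your proposal is correct and follows essentially the same route as the paper: reduce to a complete base space, invoke the random James theorem (Proposition~\ref{proposition2.10}) and the representation $L^0(\mathcal F,B)^*\cong L^0(\mathcal F,B',w^*)$ (Lemma~\ref{lemma2.8}), use directedness to extract a sequence, pass to a separable weakly compact convex subset, and obtain the maximizing strong random element via a measurable-graph selection theorem from \cite{W77}, with classical James for the converse. The only cosmetic differences are that the paper maximizes over the pointwise multifunction $F(\omega)=\overline{conv\{x^0_n(\omega)\}}$ rather than your fixed set $V_0$, and in the necessity part it produces the maximizing point of $V$ by Bochner--integrating the selector instead of evaluating it at a single $\omega$.
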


\begin{proof}
Let $\mathcal{F}^P$ be the completion of $\mathcal{F}$ with respect to $P$, since elements in $L^0(\mathcal{F},B)$ are equivalence classes and each $B$--valued $\mathcal{F}^P$--strong random element on $(\Omega,\mathcal{F}^P,P)$ is almost everywhere equal to a $B$--valued $\mathcal{F}$--strong random element on $(\Omega,\mathcal{F},P)$, $L^0(\mathcal{F},B)$ and $L^0(\mathcal{F}^{P},B)$ are identified, we can thus, without loss of generality, assume that $(\Omega,\mathcal{F},P)$ is a complete probability space $($otherwise, we consider $(\Omega,\mathcal{F}^P,P)$ instead of $(\Omega,\mathcal{F},P))$. By Proposition \ref{proposition2.10},  for the part of sufficiency we only need to verify that $Ref$ can attain its maximum on $L^0(\mathcal{F},V)$ for any given $f\in L^0(\mathcal{F},B)^*$.
\par
Since $V$ is weakly compact, $V$ is bounded, then $L^0(\mathcal{F},V)$ is, of course, a.s. bounded, namely $\bigvee \{ \|x\| : x \in L^0(\mathcal{F},V) \} \in L^0_+(\mathcal{F})$, so that $\xi:=\bigvee\{Re(f(x)): x\in L^0(\mathcal{F},V)\}\in L^0_+(\mathcal{F})$. It is easy to see that $\{Re(f(x)),x \in L^0(\mathcal{F},V)\}$ is directed upwards, so there exists a sequence $\{ x_n, n \in N \}$ in $L^0(\mathcal{F},V)$ such that $\{Re(f(x_n)),n\in N\}$ converges a.e. to $\xi$ in a nondecreasing way.
\par
By Lemma \ref{lemma2.8}, there exists $y\in L^0(\mathcal{F},B',w^*)$ such that $f(v)=\langle v,y\rangle$ for each $v\in L^0(\mathcal{F},B)$. Further, let $x^0_n$ be an arbitrarily chosen representative of $x_n$ for each $n\in N$ and $y^0$ an arbitrarily chosen representative of $y$, then it is obvious that $\langle x^0_n(\cdot),y^0(\cdot)\rangle$ is a representative of $\langle x_n,y\rangle$ and $\xi^0:\Omega\rightarrow(-\infty,+\infty)$, defined by $\xi^0(\omega)=sup_{n\geq 1}Re(\langle x^0_n(\omega),y^0(\omega)\rangle)=sup_{n\geq 1}Re(y^0(\omega)(x^0_n(\omega)))$ for each $\omega\in \Omega$, is a representative of $\xi=\bigvee\{Re(f(x)): x\in L^0(\mathcal{F},V)\}$.
\par
If we can prove that there exists a $V$--valued $\mathcal{F}$--strong random element $v^0$ defined on $(\Omega,\mathcal{F},P)$ such that $\xi^0(\omega)=Re(\langle v^0(\omega),y^0(\omega)\rangle)$ for each $\omega \in \Omega$, then $\xi=Re(\langle v,y\rangle)$, we will complete the proof of this lemma, where $v$ is the equivalence class of $v^0$. For this, let $L=\overline{span\{\bigcup_{n\geq 1}x^0_n(\Omega)\}}$, then $L$ is a separable complete subspace of $B$ since each $x^0_n(\Omega)$ is a separable subset of $B$ by the strong measurability of $x^0_n$, further define a multifunction $F:\Omega\rightarrow 2^L$ by $F(\omega)=\overline{conv\{x^0_n(\omega),n\in N\}}$ for each $\omega\in \Omega$, then each $F(\omega)$ is a closed convex subset of $V$, and hence each $F(\omega)$ is a weakly compact convex subset of $V$.
\par
Now, we prove that $F$ is measurable, namely $F^{-1}(G):=\{\omega\in \Omega: F(\omega)\cap G\neq \emptyset\}\in \mathcal{F}$ for each open subset $G$ of $L$. In fact, let $Q$ be the set of rational numbers in $[0,1]$, $Q_1^n=\{(r_1,r_2,\cdots,r_n)\in Q^n:\sum^n_{i=1}r_i=1\}$ and $M_n=\{\sum^n_{i=1}r_ix^0_i:(r_1,r_2,\cdots,r_n)\in Q^n_1\}$ for each $n\in N$, then $M=\bigcup_{n\geq 1}M_n$ is an at most countable set of $V$--valued $\mathcal{F}$--strong random elements, denoted by $\{v_n,n\in N\}$. It is also clear that $F(\omega)=\overline{\{v_n(\omega),n\in N\}}$ for each $\omega\in \Omega$, and thus $F$ is measurable.
\par
Now, define a multifunction $F_1:\Omega\rightarrow 2^L$ by $F_1(\omega)=\{v\in F(\omega): Re(y^0(\omega)(v)) =\xi^0(\omega)\}$ for each $\omega\in \Omega$. Since $y^0(\omega)\in B'$ and $sup\{y^0(\omega)(v):v\in F(\omega)\}=\xi^0(\omega)$ for each $\omega\in \Omega$, then each $F_1(\omega)\neq\emptyset$ since $F(\omega)$ is weakly compact. Further $Gr(F_1)=\{(\omega,v):v\in F_1(\omega)\}=Gr(F)\cap\{(\omega,v)\in\Omega\times L: Re(y^0(\omega)(v))\geq\xi^0(\omega)\}\in \mathcal{F}\bigotimes \mathcal{B}(L)$, so $F_1$ has measurable graph, by Theorem 5.10 of \cite{W77} $F_1$ has a measurable selection $v^0$. Clearly, $v^0$ is a $V$--valued $\mathcal{F}$--strong random element (since $L$ is sparable) such that $Re(y^0(\omega)(v^0(\omega)))=\xi^0(\omega)$ for each $\omega\in \Omega$.
\par
Necessity. Let $L^0(\mathcal{F},V)$ be $L^0$--convexly compact and $f \in B'$, then $f$ induces an element $\hat{f} \in L^0(\mathcal{F},B)^*$ as follows: $\hat{f}(x) =$ the equivalence class of $f(x^0(\cdot))$ for any $x \in L^0(\mathcal{F},B)$, where $x^0(\cdot)$ is an arbitrarily chosen representative of $x$. Then, by Proposition \ref{proposition2.10} there exists some $g_0 \in L^0(\mathcal{F},V)$ such that $Re(\hat{f}(g_0)) = \bigvee \{ Re(\hat{f}(g)) : g \in L^0(\mathcal{F},V) \}$. It is easy to check that $\bigvee \{ Re(\hat{f}(g)) : g \in L^0(\mathcal{F},V) \}$ is the equivalence class of the constant function with its value equal to $\sup \{ Re(f(v)) : v \in V \}$, further letting $g^0$ be an arbitrarily chosen representative of $g_0$ yields $Re(f(g^0(\omega))) = \sup \{ Re(f(v)) : v \in V \}$ for almost all $\omega$ in $\Omega$. Since the $L^0$--convex compactness of $L^0(\mathcal{F},V)$ obviously implies the a.s. boundedness of it, namely $\bigvee \{ \|x\| : x \in L^0(\mathcal{F},V) \} \in L^0_+(\mathcal{F})$, and since it is also easy to check that $\bigvee \{ \|x\| : x \in L^0(\mathcal{F},V) \} $  is the equivalence class of the constant function with its value equal to $\sup \{ \|v\| : v \in V \}$, then $\sup \{ \|v\| : v \in V \} < + \infty$, namely $V$ is a bounded set. Thus $g^0$ is Bochner--integrable, let $v_0= \int_{\Omega} g^0 (\omega) P(d\omega)$, then $Re(f(v_0)) = \int_{\Omega} Re (f(g^0(\omega))) P(d\omega) = \sup \{ Re(f(v)) : v \in V \}$. To sum up, $V$ is weakly compact by the famous James' theorem \cite{J64}.
\end{proof}
\par
Let us recall that a nonempty closed convex subset $V$ of a Banach space $(B,\|\cdot\|)$ is said to have normal structure if for each bounded closed convex subset $H$ of $V$ with $Diam(H) := \sup \{ \|h_1 - h_2 \| : h_1,h_2 \in H \} >0$ there exists at least one point $h_0$ of $H$ such that $\sup \{ \|h_0 -h \| : h \in H \} < Diam(H)$ (such an $h_0$ is called a nondiametral point of $H$). For a nonempty subset $H$ of a complete $RN$ module over $K$ with base $(\Omega,\mathcal{F},P)$, $D(H) := \bigvee \{ \|h_1 - h_2 \| : h_1,h_2 \in H \}$ is called the random diameter of $H$, it is clear that $H$ is a.s. bounded iff $D(H) \in L^0_+(\mathcal{F})$. Similarly, we have the following:
\begin{definition}\label{definition2.12}\cite{GZWG18}
 A nonempty closed $L^0$--convex subset $G$ of a complete $RN$ module over $K$ with base $(\Omega,\mathcal{F},P)$ is said to have random normal structure if for each closed a.s. bounded $L^0$--convex subset $H$ of $G$ with $D(H)>0$ there exists at least one point $h_0 \in H$ such that  $\bigvee\{ \|h_0-h\| : h \in H \} < D(H)$ on $(D(H)>0)$. Such an $h_0$ is called a nondiametral point of $H$.
\end{definition}
\par
In \cite{GZWG18}, we proved that every closed $L^0$--convex subset of a random uniformly convex $RN$ module has random normal structure. Specially, for a closed convex subset $V$ of a uniformly convex Banach space $B$, $L^0(\mathcal{F},V)$ has random normal structure. In this paper, we proved Theorem \ref{theorem2.16} below, which answers Problem (2) in Introduction of this paper. To prove it, we first give Definition \ref{definition2.13}, Lemmas \ref{lemma2.14} and \ref{lemma2.15} below.
\begin{definition}\cite{Guo10}\label{definition2.13}
A nonempty subset $G$ of a left module $E$ over the algebra $L^0(\mathcal{F}, K)$ is said to have the countable concatenation property (or simply, $G$ is stable ) if for each  sequence $\{ g_n: n \in N \}$ of $G$ and each countable partition $\{ A_n: n \in N \}$ of $\Omega$ to $\mathcal{F}$ there exists some $g \in G$ such that ${\tilde I}_{A_n}\cdot g = {\tilde I}_{A_n}\cdot g_n$ for each $n \in N$.
\end{definition}
\par
As pointed out in \cite{Guo10}, for a stable subset $G$ of an $RN$ module over $K$ with base $(\Omega,\mathcal{F},P)$, $g$ as in Definition \ref{definition2.13} must be unique, denoted by $\sum _{n=1}^{\infty} {\tilde I}_{A_n}\cdot g_n$
\begin{lemma}\label{lemma2.14}\cite{GZWG18}
Let $(E,\|\cdot\|)$ and $(E_1,\|\cdot\|_1)$ be two complete $RN$ modules over $K$ with base $(\Omega,\mathcal{F},P)$ $($at this time $E$ and $E_1$ are both stable, see \cite{Guo10,GZWG18}$)$. $G \subset E$ a nonempty subset and $T : G \to E_1$ an $L^0$--Lipschitz mapping $($namely, there exists $\xi \in L^0_+(\mathcal{F})$ such that $\|T(x) - T(y)\|_1 \leq \xi \cdot \|x -y \|$ for all $x$ and $y \in G)$. Then when $G$ is stable, $T$ is stable $($and hence, $T(G)$ is also stable$)$, namely $T(\sum_{n= 1}^\infty \tilde{I}_{A_n} \cdot g_n) = \sum_{n= 1}^\infty \tilde{I}_{A_n} \cdot (T(g_n))$ for each sequence $\{ g_n: n \in N \}$ in $G$ and each countable partition $\{ A_n: n \in N \}$ of $\Omega$ to $\mathcal{F}$.
\end{lemma}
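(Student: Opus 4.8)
The plan is to reduce everything to the $L^0$--Lipschitz inequality together with the defining properties $(RN-1)$ and $(RN-3)$ of an $L^0$--norm, using that multiplication by an idempotent $\tilde{I}_A$ (for $A\in\mathcal{F}$) is continuous on an $RN$ module (Proposition \ref{proposition2.4}) and that $L^0(\mathcal{F},K)$ is commutative. First I would isolate a ``local'' consequence of $L^0$--Lipschitzness: if $x,y\in G$ and $A\in\mathcal{F}$ satisfy $\tilde{I}_A x=\tilde{I}_A y$, then $\tilde{I}_A T(x)=\tilde{I}_A T(y)$. Indeed, multiplying $\|T(x)-T(y)\|_1\le\xi\|x-y\|$ by $\tilde{I}_A\ge 0$ and invoking $(RN-1)$ gives
\[
\|\tilde{I}_A(T(x)-T(y))\|_1=\tilde{I}_A\|T(x)-T(y)\|_1\le \xi\,\tilde{I}_A\|x-y\|=\xi\|\tilde{I}_A(x-y)\|=0,
\]
so $\tilde{I}_A(T(x)-T(y))=\theta$ by $(RN-3)$.

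Next, fix a sequence $\{g_n:n\in N\}$ in $G$ and a countable partition $\{A_n:n\in N\}$ of $\Omega$ to $\mathcal{F}$. Stability of $G$ furnishes $g:=\sum_{n\ge 1}\tilde{I}_{A_n}\cdot g_n\in G$, and stability of $E_1$ furnishes $h:=\sum_{n\ge 1}\tilde{I}_{A_n}\cdot(T(g_n))\in E_1$, which is characterized by $\tilde{I}_{A_n}\cdot h=\tilde{I}_{A_n}\cdot(T(g_n))$ for every $n$. By the uniqueness of the countable concatenation it therefore suffices to prove $\tilde{I}_{A_n}\cdot T(g)=\tilde{I}_{A_n}\cdot(T(g_n))$ for each fixed $n$. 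For this I would first check that $\tilde{I}_{A_n}\cdot g=\tilde{I}_{A_n}\cdot g_n$: since $\{A_k\}$ is a partition, $\sum_{k\ge 1}\tilde{I}_{A_k}=1$ in the $(\varepsilon,\lambda)$--topology, hence $g_n=\sum_{k\ge 1}\tilde{I}_{A_k}\cdot g_n$ and $g-g_n=\sum_{k\ge 1}\tilde{I}_{A_k}\cdot(g_k-g_n)$; multiplying through by $\tilde{I}_{A_n}$ and using the continuity of this multiplication together with $\tilde{I}_{A_n}\tilde{I}_{A_k}=\tilde{I}_{A_n\cap A_k}$ yields $\tilde{I}_{A_n}\cdot(g-g_n)=\tilde{I}_{A_n}\cdot(g_n-g_n)=\theta$. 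Applying the local consequence above with $x=g$, $y=g_n$, $A=A_n$ now gives $\tilde{I}_{A_n}\cdot T(g)=\tilde{I}_{A_n}\cdot(T(g_n))$, as wanted; hence $T(g)=h=\sum_{n\ge 1}\tilde{I}_{A_n}\cdot(T(g_n))$, and $T(G)$ is then stable because it is the image of the stable set $G$ under the (now established) stable map $T$.

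I expect the only slightly delicate point to be the countable bookkeeping behind $\tilde{I}_{A_n}\cdot(g-g_n)=\theta$, namely the justification that $\tilde{I}_{A_n}$ may be distributed across an infinite concatenation; this relies on the continuity of the module multiplication in $(E,\mathcal{T}_{\varepsilon,\lambda})$ and, as the parenthetical in the statement recalls, on the completeness of $E$ and $E_1$, which is precisely what makes them stable so that the concatenations $\sum_{n\ge 1}\tilde{I}_{A_n}\cdot g_n$ and $\sum_{n\ge 1}\tilde{I}_{A_n}\cdot(T(g_n))$ exist to begin with. Everything else is a routine manipulation of $(RN-1)$, $(RN-3)$ and the commutativity of $L^0(\mathcal{F},K)$.
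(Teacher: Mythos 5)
Your proof is correct and is essentially the standard argument (the paper itself quotes Lemma \ref{lemma2.14} from \cite{GZWG18} without reproducing a proof): the entire content is your ``local'' observation that multiplying the $L^0$--Lipschitz inequality by $\tilde{I}_{A_n}$ and invoking $(RN\text{-}1)$ and $(RN\text{-}3)$ forces $\tilde{I}_{A_n}\cdot T(g)=\tilde{I}_{A_n}\cdot(T(g_n))$, after which uniqueness of the countable concatenation in the stable module $E_1$ finishes the identity, and stability of $T(G)$ follows at once. The only superfluous step is your re-derivation of $\tilde{I}_{A_n}\cdot g=\tilde{I}_{A_n}\cdot g_n$ via $\sum_{k\geq 1}\tilde{I}_{A_k}=1$ and continuity of the module multiplication: that identity is precisely the defining property of $g=\sum_{n\geq 1}\tilde{I}_{A_n}\cdot g_n$ in Definition \ref{definition2.13}, so no limiting argument is needed there.
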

\begin{lemma}\label{lemma2.15}\cite{GZWG18}
Let $G$ be a stable subset of $L^0(\mathcal{F})$ such that $G$ has an upper $($lower$)$ bound $\xi \in L^0(\mathcal{F})$, then for each $\varepsilon \in L^0_{++}(\mathcal{F})$ there exists some $g_{\varepsilon} \in G$ such that $g_{\varepsilon} > \bigvee G - \varepsilon$ on $\Omega$ $($correspondingly, $g_{\varepsilon} < \bigwedge G + \varepsilon$ on $\Omega)$.
\end{lemma}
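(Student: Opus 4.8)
I would read the final displayed statement as Lemma~\ref{lemma2.15}, and here is how I would prove it.

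The plan is: first exhibit a countable subfamily of $G$ whose supremum is $\bigvee G$ (available from the randomized supremum principle, i.e.\ Proposition~\ref{proposition2.1}), and then use the stability of $G$ to glue, fibrewise over $\Omega$, a single element of $G$ that beats $\bigvee G-\varepsilon$ everywhere at once. Concretely, since $G$ has an upper bound in $L^0(\mathcal{F})$, Proposition~\ref{proposition2.1}(3) yields $\eta:=\bigvee G\in L^0(\mathcal{F})$; fix a representative $\eta^0$, which is real--valued off a $P$--null set. By Proposition~\ref{proposition2.1}(1) choose $\{a_n:n\in N\}\subseteq G$ with $\bigvee_{n\ge1}a_n=\eta$; fixing representatives $a_n^0$, this means $\sup_{n\ge1}a_n^0(\omega)=\eta^0(\omega)$ for $P$--almost all $\omega$.

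Now fix $\varepsilon\in L^0_{++}(\mathcal{F})$ together with a representative $\varepsilon^0$ with $\varepsilon^0>0$ a.e. For a.e.\ $\omega$, because $\sup_{n\ge1}a_n^0(\omega)=\eta^0(\omega)\in\mathbb{R}$ and $\varepsilon^0(\omega)>0$, the set $\{n\in N:a_n^0(\omega)>\eta^0(\omega)-\varepsilon^0(\omega)\}$ is nonempty; let $n(\omega)$ be its least element. The map $\omega\mapsto n(\omega)$ is $\mathcal{F}$--measurable (it is defined by countably many measurable comparisons), so the sets $B_n:=\{\omega:n(\omega)=n\}$, $n\in N$, form — after absorbing the exceptional null set into $B_1$ — a countable partition of $\Omega$ into members of $\mathcal{F}$. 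By the countable concatenation property of $G$ (its stability), $g_\varepsilon:=\sum_{n\ge1}\tilde{I}_{B_n}a_n$ is a well--defined element of $G$; and on each $B_n$ we have $g_\varepsilon=a_n$, hence $g_\varepsilon^0>\eta^0-\varepsilon^0$ there. Since $\bigcup_{n\ge1}B_n=\Omega$ up to a null set, $g_\varepsilon>\bigvee G-\varepsilon$ on $\Omega$, which is the assertion for the supremum. The version for a lower bound and the infimum follows at once by applying what was just proved to $-G$, which is again stable, is bounded above when $G$ is bounded below, and has $\bigvee(-G)=-\bigwedge G$.

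I expect the only genuinely substantive point to be the passage from ``$\sup_n a_n=\bigvee G$ with $a_n\in G$'' — which, fibrewise, merely yields for each $\omega$ some index $n(\omega)$ with $a_{n(\omega)}(\omega)$ within $\varepsilon(\omega)$ of $\bigvee G$ — to a \emph{single} element of $G$ lying within $\varepsilon$ of $\bigvee G$ simultaneously a.e.\ on $\Omega$; this is exactly the role of stability/countable concatenation, and without it the conclusion is false in general. The two spots needing a little care are the $\mathcal{F}$--measurability of the ``first good index'' $n(\cdot)$ (handled by taking the least index, so that $\{n(\cdot)=n\}$ is a Boolean combination of measurable sets) and the claim that $\{n(\cdot)<\infty\}$ has full measure (which uses both that $\bigvee G\in L^0(\mathcal{F})$, hence is finite a.e., and that $\varepsilon>0$ on all of $\Omega$). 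Everything else is bookkeeping.
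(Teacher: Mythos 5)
Your proof is correct. Note that the paper itself does not reprove Lemma~\ref{lemma2.15} but cites it from \cite{GZWG18}; your argument is nonetheless the standard one for this kind of statement, and it is the expected route: extract a countable subfamily $\{a_n\}$ realizing $\bigvee G$ via Proposition~\ref{proposition2.1}(1), partition $\Omega$ by the measurable ``first good index'' $n(\cdot)$ (well defined a.e.\ because $\bigvee G\in L^0(\mathcal{F})$ is finite a.e.\ and $\varepsilon>0$ on $\Omega$), and glue with the countable concatenation property to get a single $g_\varepsilon\in G$ with $g_\varepsilon>\bigvee G-\varepsilon$ on $\Omega$. The reduction of the infimum case to the supremum case via $-G$ (which is again stable, with $\bigvee(-G)=-\bigwedge G$) is also fine, and you correctly identify stability as the one indispensable hypothesis.
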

\begin{theorem}\label{theorem2.16}
Let $(B,\|\cdot\|)$ be a Banach space over $K$ and $V\subset B$ a weakly compact convex subset with normal structure. Then $L^0(\mathcal{F},V)$, as a closed $L^0$--convex subset of $L^0(\mathcal{F},B)$, has random normal structure.
\end{theorem}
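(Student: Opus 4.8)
The plan is to recast random normal structure through a random Chebyshev radius, reduce the theorem to one inequality, and then establish that inequality by descending to a single sample point and invoking the classical normal structure of $V$. Given a closed a.s.\ bounded $L^0$--convex $H\subseteq L^0(\mathcal F,V)$ with $D(H)>0$, and writing $A_0:=(D(H)>0)$, I would set $\rho_H(h):=\bigvee\{\|h-h'\|:h'\in H\}$ (which lies in $L^0_+(\mathcal F)$ since $\rho_H(h)\le D(H)$) and $r_H:=\bigwedge\{\rho_H(h):h\in H\}$, and reduce the statement to the claim that $r_H<D(H)$ on $A_0$. Indeed, $\rho_H$ is $L^0$--Lipschitz with constant $1$, hence continuous, and each sublevel set $H^{(\xi)}:=\{h\in H:\rho_H(h)\le\xi\}$, $\xi\in L^0_+(\mathcal F)$, is closed and $L^0$--convex. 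By Theorem \ref{theorem2.11}, $L^0(\mathcal F,V)$ is $L^0$--convexly compact, and since $L^0$--convex compactness plainly passes to closed $L^0$--convex subsets, $H$ is $L^0$--convexly compact; moreover $H$ is stable, being closed and $L^0$--convex in the complete $RN$ module $L^0(\mathcal F,B)$. Using Lemma \ref{lemma2.14} (so that $\{\rho_H(h):h\in H\}$ is a stable subset of $L^0(\mathcal F)$) together with Lemma \ref{lemma2.15}, each $H^{(r_H+1/n)}$ is nonempty, and, being a decreasing sequence of nonempty closed $L^0$--convex subsets of the $L^0$--convexly compact set $H$, they admit a common point $h_0$, which then satisfies $\rho_H(h_0)=r_H$; hence if $r_H<D(H)$ on $A_0$, then $h_0$ is a nondiametral point of $H$. (As in the proof of Theorem \ref{theorem2.11}, one may take $(\Omega,\mathcal F,P)$ complete throughout.)

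To prove $r_H<D(H)$ on $A_0$ I would argue by contradiction: if it fails, then $A:=\{\omega\in A_0:r_H(\omega)=D(H)(\omega)\}$ has positive measure, and, since $r_H\le\rho_H(h)\le D(H)$ for all $h\in H$, one gets $\rho_H(h)=D(H)$ on $A$ for every $h\in H$ — on $A$, every point of $H$ is ``diametral''. Using Proposition \ref{proposition2.1} and Lemma \ref{lemma2.15} repeatedly, I would then build, by a diagonal argument, a countable family $\mathcal K=\{k_j:j\in N\}\subseteq H$ with: (a) $\bigvee\{\|k_i-k_j\|:i,j\in N\}=D(H)$ (ensured by including in $\mathcal K$ two sequences $\{p_n\},\{q_n\}\subseteq H$ with $\bigvee_n\|p_n-q_n\|=D(H)$); and (b) for every deterministic rational convex combination $v=\sum_i q_i k_{j_i}$ of members of $\mathcal K$ and every $s\in N$, some member of $\mathcal K$ lies at $L^0$--distance exceeding $D(H)-1/s$ from $v$ on $A$ — obtained by applying Lemma \ref{lemma2.15} to the stable family $\{\|v-h\|:h\in H\}$, whose supremum $\rho_H(v)$ equals $D(H)$ on $A$, and feeding the chosen witnesses back into $\mathcal K$. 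Since $v$ and the $k_j$ all lie in $H$, property (b) forces $\bigvee_j\|v-k_j\|=D(H)$ on $A$ for every such $v$.

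Next I would descend to a single fiber. Fixing representatives $k_j^{0}$ of the $k_j$ and $d$ of $D(H)$, properties (a) and (b) say that, off a countable union of null sets, $\sup_{i,j}\|k_i^{0}(\omega)-k_j^{0}(\omega)\|=d(\omega)$ for a.e.\ $\omega$ and, for each rational convex--combination pattern, $\sup_j\|\sum_i q_i k_{j_i}^{0}(\omega)-k_j^{0}(\omega)\|=d(\omega)$ for a.e.\ $\omega\in A$. Picking $\omega_0\in A$ outside this null set, $C_0:=\overline{\mathrm{co}}\{k_j^{0}(\omega_0):j\in N\}$ is a bounded closed convex subset of $V$ (recall $k_j^{0}(\omega_0)\in V$) with $\mathrm{Diam}(C_0)=d(\omega_0)>0$ in which every rational convex combination of the $k_j^{0}(\omega_0)$ is diametral; since $\sup_{w\in C_0}\|v-w\|=\sup_j\|v-k_j^{0}(\omega_0)\|$ by convexity of the norm, and both sides depend continuously on $v$ while rational convex combinations are dense in $C_0$, every point of $C_0$ is diametral. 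Thus $C_0\subseteq V$ is bounded, closed, convex, of positive diameter, yet has no nondiametral point, contradicting the normal structure of $V$. Therefore $r_H<D(H)$ on $A_0$, and the theorem follows from the first step.

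The main obstacle is exactly this passage from a random statement to a pointwise one: the normal structure of $V$ is a property of each bounded closed convex subset of $V$, so to use it one must manufacture, from the a priori non--separable set $H$, a genuine bounded closed convex diametral subset of $V$ living in a single fiber. The delicate point is to choose the countable family $\mathcal K$ rich enough — using the ``$\varepsilon$--attainability of suprema'' in stable sets from Lemma \ref{lemma2.15} and the stability of closed $L^0$--convex subsets of $L^0(\mathcal F,B)$ — so that the random diametrality of $H$ on $A$, which a priori only concerns equivalence classes and suprema over all of $H$, descends after deleting merely countably many null sets to the honest diametrality of $\overline{\mathrm{co}}\{k_j^{0}(\omega_0)\}$. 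Everything else — the completion of the probability space, the heredity of $L^0$--convex compactness and of stability, the continuity of $\rho_H$ — is routine.
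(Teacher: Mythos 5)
Your proof is correct, and it shares the paper's overall skeleton: argue by contradiction, locate a positive--measure set $A$ on which \emph{every} point of $H$ is diametral, extract a countable subfamily of $H$, and descend to a single fiber $\omega_0$ to manufacture a bounded closed convex diametral subset of $V$, contradicting the classical normal structure of $V$. But the two technical steps are carried out genuinely differently. First, you obtain $A$ directly from the infimum $r_H=\bigwedge\{\rho_H(h):h\in H\}$, producing a minimizer by intersecting the sublevel sets $H^{(r_H+1/n)}$ inside the $L^0$--convexly compact set $H$, whereas the paper invokes Theorem 3.6 of \cite{GZWW17} to minimize the $L^0$--convex function $\rho_H$; these are equivalent in substance, and in fact the attainment is dispensable in your arrangement, since once $r_H<D(H)$ on $A_0$ is known, Lemma \ref{lemma2.15} applied with $\varepsilon=\frac{1}{2}(D(H)-r_H)+{\tilde I}_{\Omega\setminus A_0}$ already yields a nondiametral point. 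Second, and more substantially, the paper constructs a single Brodski\v{i}--Mil'man--type sequence $\{x_n\}$ satisfying $\|x_{n+1}-\frac{1}{n}\sum_{i=1}^{n}x_i\|\geq D(H)-\frac{1}{n^2}$ and then proves by an explicit norm inequality that every point of $\mathrm{conv}\{x_k, k\in N\}$ is diametral; you instead saturate a countable family under ``witnesses for rational convex combinations'' via Lemma \ref{lemma2.15} and the stability of $\{\|v-h\|:h\in H\}$, which trades the clever computation for more bookkeeping but makes the fiberwise diametrality of $\overline{\mathrm{conv}}\{k^0_j(\omega_0)\}$ essentially automatic. Both routes then finish identically (density of rational combinations plus continuity of $w\mapsto\|v-w\|$). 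A noteworthy byproduct of your organization is that weak compactness of $V$ enters only through the attainment of $r_H$, which, as just noted, can be bypassed; this bears directly on the question raised in Remark \ref{remark2.17} about whether weak compactness is superfluous, and is worth examining further.
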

\begin{proof}
As in the proof of Theorem \ref{theorem2.11}, we can assume that $(\Omega,\mathcal{F},P)$ is a complete probability measure space.
\par
If $L^0(\mathcal{F},V)$ would not have random normal structure, then there exists some nonempty closed $L^0$--convex and $L^0$--bounded subset $H$ of $L^0(\mathcal{F},V)$ such that $D(H):=\bigvee\{\|h_1-h_2\|:h_1,h_2\in H\}>0$ but there does not exist any point $h$ in $H$ with the property: $\bigvee\{\|h-h_1\|:h_1\in H\}<D(H)$ on $(D(H)>0)$, we prove that this will produce a contradiction. As in the proof of Theorem 3.9 of \cite{GZWG18}, we can assume that $D(H)\in L^0_{++}(\mathcal{F})$, namely $D(H)>0$ on $\Omega$. Let us consider the $L^0$--convex function $f: H\rightarrow L^0(\mathcal{F})$ defined by $f(x)=\bigvee \{\|x-h\|:h\in H\}$ for each $x\in H$, since $L^0(\mathcal{F},V)$ is $L^0$--convexly compact by Theorem \ref{theorem2.11}, $H$ is also $L^0$--convexly compact, further one can easily see that $f$ satisfies all the conditions of Theorem 3.6 of \cite{GZWW17}, so that there exists $h_0\in H$ such that $f(h_0)=$ min $\{f(x):x\in H\}$.
\par
Thus, there exists some $A\in \mathcal{F}$ with $P(A)>0$ such that $\bigvee\{\|h_0-h\|:h\in H\}=D(H)$ on $A$, we can, of course, have that $\bigvee\{\|h_1-h\|:h\in H\}=D(H)$ on $A$ for each $h_1\in H$. Again, for the sake of brevity, we can, without loss of generality, assume that $A=\Omega$. For any given $x_1\in H$, since $\{ x_1 -h : h \in H \}$ is a closed $L^0$--convex subset and hence also stable, and further since $\|\cdot\|$ is $L^0$--Lipschitzian, then by Lemmas \ref{lemma2.14} and \ref{lemma2.15} there exists $x_2 \in H$ such that $\|x_2-x_1\|\geq D(H)-1$, similarly, there exists $x_3\in H$ such that $\|x_3-\frac{x_1+x_2}{2}\|\geq D(H)-\frac{1}{2^2}$. By the induction method there exists a sequence $\{x_n,n\in N\}$ in $H$ such that $\|x_{n+1}-\frac{1}{n}\sum^n_{i=1}x_i\|\geq D(H)-\frac{1}{n^2}$ for any $n\in N$.
\par
For any $x\in conv\{x_k,k\in N\}$, then there exist $l\in N$ and finite nonnegative numbers $\lambda_1,\lambda_2,\cdots,\lambda_l\in [0,1]$ such that $\Sigma^n_{i=1}\lambda_i=1$ and $x=\Sigma^n_{i=1}\lambda_ix_i$, where $conv(\Gamma)$ stands for the convex hull of a subset $\Gamma$ of $L^0(\mathcal{F},B)$. Let $\lambda=$ max $\{\lambda_1,\lambda_2,\cdots,\lambda_l\}$ and $\lambda_{l+1}=\cdots=\lambda_n=0$ for any $n\in N$ such that $n>l$, then
\begin{equation}\nonumber
\aligned
&\|x_{n+1}-\Sigma^l_{i=1}\lambda_ix_i\|\\
&=\|x_{n+1}-\Sigma^n_{i=1}\lambda_ix_i\|\\
&=\|n\lambda x_{n+1}-\lambda\Sigma^n_{i=1}x_i-n\lambda x_{n+1}+\lambda\Sigma^n_{i=1}x_i+\Sigma^n_{i=1}\lambda_ix_{n+1}-\Sigma^n_{i=1}\lambda_ix_i\|\\
&=\|n\lambda(x_{n+1}-\frac{1}{n}\Sigma^n_{i=1}x_i)-\Sigma^n_{i=1}(\lambda-\lambda_i)(x_{n+1}-x_i)\|\\
&\geq n\lambda\|x_{n+1}-\frac{1}{n}\Sigma^n_{i=1}x_i\|-\Sigma^n_{i=1}(\lambda-\lambda_i)\|x_{n+1}-x_i\|\\
&\geq n\lambda(D(H)-\frac{1}{n^2})-n\lambda D(H)+D(H)\\
&=D(H)-\frac{\lambda}{n}\\
&\geq D(H)-\frac{1}{n}.
\endaligned
\end{equation}
\par
So, for any $x\in conv\{x_k,k\in N\}$, $\{\|x_n-x\|,n\in N\}$ converges a.e. to $D(H)$, which also means that $\bigvee\{\|x-y\|:y\in conv\{x_k,k\in N\}\}\geq \bigvee\{\|x-x_n\|:n\in N\}=D(H)$ for any $x\in conv\{x_k,k\in N\}$. Further, it is also clear that $\bigvee\{\|x-y\|:y\in conv\{x_k,k\in N\}\}=\bigvee\{\|x_i-x_j\|:i,j\in N\}=D(H)$ for any $x\in conv\{x_k, k\in N\}$.
\par
Now, arbitrarily choose a representative $x^0_n$ of $x_n$ for each $n\in N$, let $\{v_n,n\in N\}$ be constructed as in the proof of Theorem \ref{theorem2.11}, then $sup\{\|v_n(\omega)-v_m(\omega)\|:m\in N\}=sup\{\|x^0_i(\omega)-x^0_j(\omega)\|: i,j\in N\}$ for each $n\in N$ and for almost all $\omega\in \Omega$. We can, without loss of generality, assume that $sup\{\|v_n(\omega)-v_m(\omega)\|:m\in N\}=sup\{\|x^0_i(\omega)-x^0_j(\omega)\|: i,j\in N\}$ for each $n\in N$ and each $\omega\in \Omega$. Let $L=\overline{span\{\bigcup^\infty_{n=1}x^0_n(\Omega)\}}$ and define $F:\Omega \rightarrow 2^L$ by $F(\omega)=\overline{conv\{x^0_n(\omega),n\in N\}}$ for each $\omega\in \Omega$, then $L$ is a complete separable subspace of $B$ and each $F(\omega)$ is a closed convex subset of $V$. Since $F(\omega)=\overline{\{v_n(\omega),n\in N\}}$ for each $\omega\in \Omega$, it is obvious that $sup\{\|u-v\|: v\in F(\omega)\}=sup\{\|x^0_i(\omega)-x^0_j(\omega)\|:i,j\in N\}=sup\{\|v_n(\omega)-v_m(\omega)\|: n,m\in N\}=Diam(F(\omega))$ for each $\omega \in \Omega$ and for each $u \in F(\omega)$. This means that each $F(\omega)$ does not possess any nondiametral point, which contradicts the fact that $V$ has normal structure.
\end{proof}
\begin{remark}\label{remark2.17}
The idea of constructing the sequence $\{x_n,n\in N\}$ in the proof of Theorem \ref{theorem2.16} is motivated from Brodski\v{i} and Mil'man's work \cite{BM68}, normal structure of Banach spaces was deeply studied in \cite{BM68,L84,ST90}, in particular Smith and Turett proved in \cite{ST90} that a Banach space $X$ has normal structure iff the Lebesgue--Bochner function space $L^p(\mu,X)$ has normal structure for any given $p$ such that $1<p<+\infty$, although our proof of Theorem \ref{theorem2.16} depends on the weak compactness of $V$ (the weak compactness is used to guarantee that there exists some measurable set $A$ with positive measure such that $\bigvee\{\|h_1-h\|: h\in H\}=D(H)$ on $A$ for each $h_1\in H$), we indeed wonder whether Theorem \ref{theorem2.16} is true or not when $V$ only satisfies the single condition that $V$ has normal structure, namely, is the weak compactness of $V$ superfluous in Theorem \ref{theorem2.16} ?
\end{remark}
\section{A general random fixed point theorem for a nonexpansive strong random operator}
The main result of this section is Theorem \ref{theorem3.2}, which provides a best solution to Bharucha--Reid's problem, and is based on a fixed point theorem recently obtained in \cite{GZWG18} as follows:
\begin{proposition}\label{proposition3.1}\cite{GZWG18}
Let $(E,\|\cdot\|)$ be a complete $RN$ module over $K$ with base $(\Omega, \mathcal{F}, P)$ and $G \subset E$ an $L^0$--convexly compact $L^0$--convex subset with random normal structure. Then every nonexpansive mapping $T$ form $G$ to $G$ $($namely $\|T(u)-T(v)\|\leq \|u-v\|$ for all $u,v \in G)$ has a fixed point in $G$.
\end{proposition}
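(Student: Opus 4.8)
The plan is to transcribe, into the language of $RN$ modules, Kirk's classical deduction of a fixed point from normal structure, with $L^0$--convex compactness replacing weak compactness, random normal structure replacing normal structure, and the random diameter $D(\cdot)$ replacing the ordinary diameter. By Lemma \ref{lemma2.14} the nonexpansive (hence $L^0$--Lipschitzian with constant $1$) map $T$ is stable and continuous, so every closed $L^0$--convex set met below is stable. First I would apply Zorn's lemma to the family $\mathcal{C}$ of all nonempty closed $L^0$--convex subsets $C$ of $G$ with $T(C)\subseteq C$: it is nonempty since $G\in\mathcal{C}$; a chain in $\mathcal{C}$ is a family of nonempty closed $L^0$--convex subsets of $G$ with the finite intersection property, so the $L^0$--convex compactness of $G$ forces its intersection to be nonempty, and that intersection is again closed, $L^0$--convex and $T$--invariant. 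Hence $\mathcal{C}$ has a minimal element $K$, which is itself $L^0$--convexly compact.

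Since $L^0$--convex compactness entails a.s. boundedness, $D(K)\in L^0_+(\mathcal{F})$. On the event $(D(K)=0)$ all members of the stable set $K$ coincide with one common value, which --- because $T(K)\subseteq K$ --- is a fixed point of $T$ there; so it suffices to rule out $P(D(K)>0)>0$, and under this assumption the localization-and-gluing device from the proof of Theorem 3.9 of \cite{GZWG18} lets me further assume $D(K)\in L^0_{++}(\mathcal{F})$. Minimality of $K$ then forces $K_1=K$, where $K_1$ is the closed $L^0$--convex hull of $T(K)$ (note $T(K)\subseteq T(G)\subseteq G$, so $K_1\subseteq K$ and $T(K_1)\subseteq T(K)\subseteq K_1$).

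Now introduce $\varphi:K\to L^0_+(\mathcal{F})$ by $\varphi(x)=\bigvee\{\|x-y\|:y\in K\}$, the $L^0$--Chebyshev radius $r:=\bigwedge\{\varphi(x):x\in K\}$, and the $L^0$--Chebyshev centre $C:=\{x\in K:\varphi(x)=r\}$. The functional $\varphi$ is $L^0$--convex, lower semicontinuous (a pointwise $\bigvee$ of the continuous $L^0$--convex maps $x\mapsto\|x-y\|$) and finite--valued (dominated by $D(K)$), and $K$ is $L^0$--convexly compact, so Theorem 3.6 of \cite{GZWW17} --- exactly the minimization result used in the proof of Theorem \ref{theorem2.16} --- shows $\varphi$ attains its infimum; thus $C\neq\emptyset$, and $C$ is plainly closed and $L^0$--convex. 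For $T$--invariance: if $x\in C$ then $\|T(x)-T(y)\|\leq\|x-y\|\leq r$ for every $y\in K$, so $T(K)$ lies in the closed $L^0$--ball of $L^0$--radius $r$ about $T(x)$; this ball is closed and $L^0$--convex, hence contains $K_1=K$, giving $\varphi(T(x))\leq r$ and therefore $\varphi(T(x))=r$, i.e. $T(x)\in C$. By minimality $C=K$, so $\varphi\equiv r$ on $K$ and hence $D(K)=\bigvee\{\varphi(x):x\in K\}=r$. But random normal structure, applied to $K$ (closed, a.s. bounded, $L^0$--convex, $D(K)>0$ on $\Omega$), yields a nondiametral point $h_0\in K$ with $r\leq\varphi(h_0)<D(K)=r$ on $\Omega$ --- the desired contradiction; therefore $P(D(K)>0)=0$ and $K$ reduces to a single random point fixed by $T$.

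The delicate point is not the Chebyshev--centre part (which, thanks to Theorem 3.6 of \cite{GZWW17} and the $L^0$--convexity of balls, copies the Banach space argument almost verbatim) but the reduction to $D(K)\in L^0_{++}(\mathcal{F})$: unlike in a Banach space, a minimal invariant set need not collapse to a point on all of $\Omega$ simultaneously, so one must combine the countable concatenation property of closed $L^0$--convex subsets with measurable gluing --- as in Theorem 3.9 of \cite{GZWG18} --- to localize the whole construction onto the event $(D(K)>0)$ while keeping $T$ well defined there (via Lemma \ref{lemma2.14}) and preserving both invariance and minimality.
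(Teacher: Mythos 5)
Your argument is correct and follows essentially the same route as the source of this result (the present paper only quotes Proposition \ref{proposition3.1} from \cite{GZWG18} without reproducing its proof): Zorn's lemma via $L^0$--convex compactness to obtain a minimal closed $L^0$--convex $T$--invariant set, the Chebyshev--centre construction with the minimizer supplied by Theorem 3.6 of \cite{GZWW17}, and the localization to the event $(D(K)>0)$ handled by the stability/concatenation device of Theorem 3.9 of \cite{GZWG18}. I see no gaps.
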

\begin{theorem}\label{theorem3.2}
Let $(B,\|\cdot\|)$ be a Banach space over $K$ and $V$ a weakly compact convex subset of $B$ such that $V$ has normal structure, then every strong random nonexpansive operator $T : (\Omega,\mathcal{F},\mu) \times V \to V$ has a strong $\mathcal{F}$--random element $x^0(\cdot) : \Omega \to V$ such that $T(\omega,x^0(\omega)) = x^0(\omega)$ for almost all $\omega \in \Omega$.
\end{theorem}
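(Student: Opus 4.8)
The plan is to reduce the theorem to Proposition~\ref{proposition3.1} by lifting $T$ to a single nonexpansive self-mapping of the $RN$ module $L^0(\mathcal{F},V)$. As in the proofs of Theorems~\ref{theorem2.11} and \ref{theorem2.16} we may assume $(\Omega,\mathcal{F},\mu)$ is complete. The first and most delicate step is to show that for every strong $\mathcal{F}$--random element $x^0:\Omega\to V$ the map $\omega\mapsto T(\omega,x^0(\omega))$ is again a strong $\mathcal{F}$--random element into $V$. I would prove this by approximation: write $x^0$ as the pointwise limit of simple random elements $x^0_n=\sum_{i=1}^{k_n} b^n_i\,\mathbf{1}_{A^n_i}$, where $\{A^n_i\}_{i=1}^{k_n}$ is a finite $\mathcal{F}$--partition of $\Omega$ and $b^n_i\in V$. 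Then $\omega\mapsto T(\omega,x^0_n(\omega))$ agrees with the strong random element $T(\cdot,b^n_i)$ on $A^n_i$ for each $i$, hence is measurable and has range contained in $\bigcup_{i=1}^{k_n}T(\cdot,b^n_i)(\Omega)$, a finite union of separable sets; so it is a strong random element. Since each $T(\omega,\cdot)$ is nonexpansive and thus continuous, $T(\omega,x^0_n(\omega))\to T(\omega,x^0(\omega))$ for every $\omega$. The limit is therefore measurable and has range contained in the norm closure $S$ of $\bigcup_{n}\bigcup_{i=1}^{k_n}T(\cdot,b^n_i)(\Omega)$, which is separable and, since $V$ is (weakly compact, hence norm) closed, contained in $V$. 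Thus $\omega\mapsto T(\omega,x^0(\omega))$ is a strong $\mathcal{F}$--random element into $V$; because $T(\omega,\cdot)$ is a function, it depends only on the equivalence class of $x^0$, so this construction descends to a well-defined map $\widehat T:L^0(\mathcal{F},V)\to L^0(\mathcal{F},V)$ sending $x$ to the equivalence class of $T(\cdot,x^0(\cdot))$.

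Next I would verify that $\widehat T$ is nonexpansive in the sense of Proposition~\ref{proposition3.1}. If $x,y\in L^0(\mathcal{F},V)$ have representatives $x^0,y^0$, then the random nonexpansiveness of $T$ gives $\|T(\omega,x^0(\omega))-T(\omega,y^0(\omega))\|\le\|x^0(\omega)-y^0(\omega)\|$ for every $\omega\in\Omega$; passing to equivalence classes yields $\|\widehat T(x)-\widehat T(y)\|\le\|x-y\|$ in $L^0_+(\mathcal{F})$. Now the two main results of Section~2 apply: by Example~\ref{example2.5}, $L^0(\mathcal{F},B)$ is a complete $RN$ module with $L^0(\mathcal{F},V)$ a closed $L^0$--convex subset; by Theorem~\ref{theorem2.11}, $L^0(\mathcal{F},V)$ is $L^0$--convexly compact since $V$ is weakly compact; and by Theorem~\ref{theorem2.16}, $L^0(\mathcal{F},V)$ has random normal structure since $V$ is in addition convex with normal structure. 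Hence $G:=L^0(\mathcal{F},V)$ fulfils all hypotheses of Proposition~\ref{proposition3.1}, so the nonexpansive self-map $\widehat T$ has a fixed point $x\in G$.

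Finally I would unwind the conclusion: fixing a representative $x^0$ of $x$, the identity $\widehat T(x)=x$ in $L^0(\mathcal{F},V)$ says precisely that $T(\omega,x^0(\omega))=x^0(\omega)$ for almost all $\omega\in\Omega$, where $x^0:\Omega\to V$ is a strong $\mathcal{F}$--random element — exactly the asserted random fixed point. The only genuinely technical point is the first step, a Carathéodory-type verification that the pointwise composition of a strong random operator with a strong random element is again a strong random element (using simple approximation and the continuity of $T(\omega,\cdot)$); once $\widehat T$ is in hand, everything else is a direct application of Theorems~\ref{theorem2.11} and \ref{theorem2.16} and Proposition~\ref{proposition3.1}.
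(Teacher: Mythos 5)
Your proposal is correct and follows essentially the same route as the paper: lift $T$ to a nonexpansive self-map $\widehat T$ of $L^0(\mathcal{F},V)$, invoke Theorems~\ref{theorem2.11} and \ref{theorem2.16} to verify the hypotheses of Proposition~\ref{proposition3.1}, and unwind the fixed point. The only difference is that you spell out the simple-function approximation argument showing $T(\cdot,x^0(\cdot))$ is again a strong random element, a step the paper's proof asserts in one line; your elaboration is a correct filling-in of that detail rather than a different approach.
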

\begin{proof}
By Theorems \ref{theorem2.11} and \ref{theorem2.16}, $L^0(\mathcal{F},V)$ has both $L^0$--convex compactness and random normal structure. Further, $T$ induces a nonexpansive mapping $\hat{T}: L^0(\mathcal{F},V)\rightarrow L^0(\mathcal{F},V)$ in a natural manner: for each given $x\in L^0(\mathcal{F},V)$, arbitrarily chose a representative $x^0$ of $x$, then define $\hat{T}(x)$ as the equivalence class of $T(\cdot,x^0(\cdot))$, then $T(\cdot,x^0(\cdot))$ is a $V$--valued strong $\mathcal{F}$--random element since $T$ is nonexpansive (and thus also continuous) and a strong random operator, so that $\hat{T}$ is well defined. Applying Proposition \ref{proposition3.1} to $\hat{T}$ and $L^0(\mathcal{F},V)$ produces some $x\in L^0(\mathcal{F},V)$ such that $\hat{T}(x)=x$, then an arbitrarily chosen representative $x^0$ of $x$ must satisfy $T(\omega,x^0(\omega))=x^0(\omega)$ for almost all $\omega\in\Omega$.
\end{proof}
\begin{remark}\label{remark3.3}

Up to now, Theorem \ref{theorem3.2} also provides a most general partial answer to the question posed by Xu in Remark 1 of \cite{Xu90}, namely in the general case of Theorem \ref{theorem3.2} the assumption that $\mathcal{F}$ is closed under the Suslin operation is indeed superfluous.

\end{remark}
\par
Through Corollaries \ref{corollary3.4} and \ref{corollary3.5} below we illustrate that the fixed point theorems provided by this paper can generalize and improve the random fixed point theorems currently available for nonexpansive random self--mappings, Corollary \ref{corollary3.4} generalizes and improves Lemma 1 of \cite{L88} in that a strong random operator is employed to replace the assumption on the separability in \cite{L88} and we also remove the assumption in \cite{L88} that $\sum$ is closed under the Suslin operation. Similarly, Corollary \ref{corollary3.5} generalizes and improves Theorems $3^{\prime}$ and $6^{\prime}$ of \cite{L88}.
\begin{corollary}\label{corollary3.4}
Let $S$ be a nonempty closed convex subset of a uniformly convex Banach space $X$ and $f: \Omega\times S\rightarrow S$ a nonexpansive strong random operator such that $f(\omega,S)$ is bounded for any $\omega\in \Omega$. Then $f$ has a strongly measurable random fixed point.
\end{corollary}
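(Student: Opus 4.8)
The plan is to transfer the problem into the complete $RN$ module $L^0(\mathcal{F},X)$ and apply Proposition \ref{proposition3.1}, along the lines of the proof of Theorem \ref{theorem3.2}. The one genuinely new point is that the classical ``$f(\omega,S)$ bounded'' reduction, which in the deterministic case replaces $S$ by $\overline{conv}(f(S))$, must here be performed \emph{inside} $L^0(\mathcal{F},X)$, because the bounded closed convex sets $\overline{conv}(f(\omega,S))$ vary with $\omega$ and, in the absence of any uniformity in $\omega$, need not be contained in a single weakly compact convex subset of $X$, so that Theorem \ref{theorem3.2} is not directly applicable. As usual we first replace $(\Omega,\mathcal{F},P)$ by its $P$--completion (as at the start of the proofs of Theorems \ref{theorem2.11} and \ref{theorem2.16}), so that Lemma \ref{lemma2.8} and the measurable selection theorem of \cite{W77} apply.

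First I would introduce the induced mapping $\hat{f}:L^0(\mathcal{F},S)\to L^0(\mathcal{F},S)$ by letting $\hat{f}(x)$ be the equivalence class of $\omega\mapsto f(\omega,x^0(\omega))$ for any representative $x^0$ of $x$; since $f$ is a strong random operator with each $f(\omega,\cdot)$ continuous, $\omega\mapsto f(\omega,x^0(\omega))$ is an $S$--valued strong $\mathcal{F}$--random element, so $\hat{f}$ is well defined (exactly as in the proof of Theorem \ref{theorem3.2}), is plainly nonexpansive, and is stable by Lemma \ref{lemma2.14}. Let $G$ be the closure of the $L^0$--convex hull of $\hat{f}(L^0(\mathcal{F},S))$; then $G$ is a nonempty closed stable $L^0$--convex subset of $L^0(\mathcal{F},S)$ (stability of $G$ following from that of $\hat{f}(L^0(\mathcal{F},S))$) and $\hat{f}(G)\subseteq\hat{f}(L^0(\mathcal{F},S))\subseteq G$. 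Moreover $G$ is a.s.\ bounded: since $\|\hat{f}(x)\|(\omega)=\|f(\omega,x^0(\omega))\|\le\sup\{\|f(\omega,s)\|:s\in S\}<+\infty$ for each $x$ and each $\omega$, the element $\bigvee\{\|x\|:x\in G\}=\bigvee\{\|x\|:x\in\hat{f}(L^0(\mathcal{F},S))\}$ is, by Proposition \ref{proposition2.1}(1), a supremum of countably many such $L^0$--norms, hence everywhere dominated by the function $\omega\mapsto\sup\{\|f(\omega,s)\|:s\in S\}$ (which itself need not be $\mathcal{F}$--measurable) and therefore finite--valued, i.e.\ an element of $L^0_+(\mathcal{F})$.

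Granting that $G$ has random normal structure and is $L^0$--convexly compact, Proposition \ref{proposition3.1} applied to $\hat{f}:G\to G$ produces $x\in G\subseteq L^0(\mathcal{F},S)$ with $\hat{f}(x)=x$, and any representative $x^0$ of $x$ is then the required strongly measurable random fixed point, since $f(\omega,x^0(\omega))=x^0(\omega)$ for almost all $\omega$. Random normal structure of $G$ is immediate: $X$ being uniformly convex, the result of \cite{GZWG18} recalled before Definition \ref{definition2.12} gives (with $V=X$) that $L^0(\mathcal{F},X)$ has random normal structure, and by Definition \ref{definition2.12} this property is inherited by every closed $L^0$--convex subset, in particular by $G$. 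For $L^0$--convex compactness I would invoke the random James theorem, Proposition \ref{proposition2.10}: given $h\in L^0(\mathcal{F},X)^{*}$, write $h(\cdot)=\langle\cdot,y\rangle$ with $y\in L^0(\mathcal{F},X',w^{*})$ by Lemma \ref{lemma2.8}; since $G$ is a.s.\ bounded, $\xi:=\bigvee\{Re\langle x,y\rangle:x\in G\}\in L^0(\mathcal{F})$ by Proposition \ref{proposition2.1}(3), and $\{Re\langle x,y\rangle:x\in G\}$ is directed upwards (combine two elements of $G$ with complementary indicator coefficients, using $L^0$--convexity), so by Proposition \ref{proposition2.1} there is a sequence $\{x_n,n\in N\}$ in $G$ with $Re\langle x_n,y\rangle$ increasing a.e.\ to $\xi$. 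From here I would repeat the measurable selection argument of the sufficiency part of the proof of Theorem \ref{theorem2.11}: choosing representatives $x_n^0$ and $y^0$, set $L=\overline{span\{\bigcup_{n\ge1}x_n^0(\Omega)\}}$ (separable, and reflexive as a closed subspace of the reflexive space $X$) and $F(\omega)=\overline{conv\{x_n^0(\omega),n\in N\}}$; each $F(\omega)$ is a bounded closed convex, hence weakly compact, subset of $L$, $F$ is measurable, $F_1(\omega):=\{v\in F(\omega):Re(y^0(\omega)(v))=\xi^0(\omega)\}$ is nonempty with measurable graph, and a measurable selection $v^0$ of $F_1$ satisfies $Re(y^0(\omega)(v^0(\omega)))=\xi^0(\omega)$ for each $\omega$, where $\xi^0(\omega):=\sup_{n\ge1}Re(y^0(\omega)(x_n^0(\omega)))$ is a representative of $\xi$.

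The step requiring care beyond the proof of Theorem \ref{theorem2.11} --- and the one I expect to be the main technical obstacle --- is to verify that $v:=$ the equivalence class of $v^0$ actually lies in $G$; a priori one knows only that $v^0(\omega)$ lies pointwise in the closed convex hull $F(\omega)$, and here, unlike in Theorem \ref{theorem2.11}, $G$ is a proper subset of $L^0(\mathcal{F},S)$. I would settle this using stability of $G$: let $\{v_k,k\in N\}$ be the countable family of rational convex combinations of the $x_n$'s (as constructed in the proof of Theorem \ref{theorem2.11}) realizing $F(\omega)=\overline{\{v_k(\omega),k\in N\}}$, so that each $v_k\in G$; for each $\varepsilon>0$ the sets $A_k^{\varepsilon}=(\|v^0-v_k\|<\varepsilon)\setminus\bigcup_{j<k}(\|v^0-v_j\|<\varepsilon)$, $k\in N$, form a countable partition of $\Omega$ in $\mathcal{F}$, and by stability $w_{\varepsilon}:=\sum_{k=1}^{\infty}\tilde{I}_{A_k^{\varepsilon}}\cdot v_k\in G$ with $\|v-w_{\varepsilon}\|<\varepsilon$ on $\Omega$; letting $\varepsilon\downarrow 0$ yields $v\in\overline{G}=G$. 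Hence $Re\,h$ attains its supremum $\xi$ over $G$ at $v$, so $G$ is $L^0$--convexly compact by Proposition \ref{proposition2.10}, which completes the proof.
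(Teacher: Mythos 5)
Your proposal is correct, but it takes a genuinely different route from the paper's. The paper's own proof is short: it only verifies that $\hat{f}(L^0(\mathcal{F},S))$ is a.s.\ bounded (via the essential supremum $\xi^0\leq r$ a.s., which is exactly your argument using Proposition \ref{proposition2.1}(1) and Remark \ref{remark2.2}) and then invokes Corollary 3.12 and Remark 3.13 of \cite{GZWG18}, i.e.\ an already--established random Browder--G\"{o}hde type theorem for nonexpansive self--maps with a.s.\ bounded range in random uniformly convex $RN$ modules; that external result operates at the abstract module level, where $L^0$--convex compactness of a.s.\ bounded closed $L^0$--convex sets comes from random uniform convexity rather than from measurable selections. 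You instead reconstruct the needed input from the material displayed in this paper alone: you pass to the closed $L^0$--convex hull $G$ of the range, obtain random normal structure of $G$ from the random uniform convexity of $L^0(\mathcal{F},X)$, and prove $L^0$--convex compactness of $G$ by rerunning the James--theorem/measurable--selection argument of Theorem \ref{theorem2.11}. The one step with no counterpart in the paper --- showing that the selected maximizer $v$ lies in $G$ rather than merely in $L^0(\mathcal{F},S)$ --- you handle correctly by approximating $v$ with the stable combinations $\sum_{k}\tilde{I}_{A_k^{\varepsilon}}\cdot v_k$ of the countably many rational convex combinations $v_k\in G$; note that every closed $L^0$--convex subset of a complete $RN$ module is automatically stable (approximate $\sum_{n}\tilde{I}_{A_n}\cdot g_n$ by finite $L^0$--convex combinations), so you need not route this through Lemma \ref{lemma2.14}. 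Your approach is longer but self--contained and makes visible exactly where the pointwise hypothesis that $f(\omega,S)$ is bounded enters; the paper's is shorter but opaque without \cite{GZWG18}. Two harmless details to tidy up: $F(\omega)=\overline{conv\{x_n^0(\omega),n\in N\}}$ is bounded, hence weakly compact, only for almost all $\omega$, so the representatives $x_n^0$ should be modified on a null set before the selection theorem of \cite{W77} is applied with $F_1(\omega)\neq\emptyset$ for every $\omega$; and the identity $\xi^0(\omega)=\sup\{Re(y^0(\omega)(u)):u\in F(\omega)\}$ holds because a continuous linear functional has the same supremum over a set as over its closed convex hull --- this is worth stating explicitly, as it is what makes $F_1(\omega)$ nonempty.
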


\begin{proof}
Let $r:\Omega\rightarrow [0,+\infty)$ be the real--valued function defined by $r(\omega)=sup\{\|f(\omega,s)\|: s\in S\}$, then for each $S$--valued $\mathcal{F}$--strong random element $x^0:\Omega\rightarrow S$, $f(\cdot,x^0(\cdot))$ is an $S$--valued strong $\mathcal{F}$--random element since $f$ is a nonexpansive strong random operator, further, $\|f(\omega,x^0(\omega))\|\leq r(\omega)$ for each $\omega\in \Omega$, denote $\xi^0=esssup\{\|f(\cdot,x^0(\cdot))\|: x^0$ is an $S$--valued and strong $\mathcal{F}$--random element $\}$, then $\xi^0$ is a nonnegative real--valued $\mathcal{F}$--random variable on $(\Omega,\mathcal{F},P)$ and $\xi^0(\omega)\leq r(\omega)$ for almost all $\omega\in\Omega$.
\par
Define $\hat{f}: L^0(\mathcal{F},S)\rightarrow L^0(\mathcal{F},S)$ by $\hat{f}(x)=$ the equivalence class of $f(\cdot,x^0(\cdot))$ for each $x\in L^0(\mathcal{F},S)$ with $x^0$ as a representative of $x$, then $\|\hat{f}(x)\|\leq\xi$ for each $x\in L^0(\mathcal{F},S)$, where $\xi$ stands for the equivalence class of $\xi^0$, namely $\hat{f}(L^0(\mathcal{F},S))$ is a.s. bounded. By Corollary 3.12 and Remark 3.13 of \cite{GZWG18}, $\hat{f}$ has a fixed point $x$ in $L^0(\mathcal{F},S)$, then a representative $x^0$ of $x$ must satisfy $f(\omega,x^0(\omega))=x^0(\omega)$ for almost all $\omega\in \Omega$.

\end{proof}

\begin{corollary}\label{corollary3.5}
Let $S$ be a nonempty closed convex subset of a Hilbert space $X$ and $f: \Omega\times S\rightarrow X$ a nonexpansive strong random operator such that $f(\omega,S)$ is bounded for any $\omega\in \Omega$. Then there exists an $S$--valued strong $\mathcal{F}$--random element $\varphi: \Omega\rightarrow S$ such that $\|\varphi(\omega)-f(\omega,\varphi(\omega))\|=d(f(\omega,\varphi(\omega)),S)$ for almost all $\omega\in\Omega$. Further, $\varphi$ is also a random fixed point of $f$ if $f$ satisfies, in addition, one of the following two conditions:
\begin{enumerate}[(i)]
\item For each $\omega \in \Omega$ and each $x \in S$ with $x \neq f(\omega,x)$, there exists $y$, depending on $\omega$ and $x$, in $I_S(x) = \{ x+c(z-x) : z \in S$ and $c \geq 0\}$ such that $\|y-f(\omega,x)\| < \|x-f(\omega,x)\|$.
\item $f$ is weakly inward, namely, $f(\omega,x) \in \overline{I_S(x)}$ for each $\omega \in \Omega$ and each $x \in S$.
\end{enumerate}
\end{corollary}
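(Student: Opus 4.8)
The plan is to split the statement into two parts: first, the existence of a best‑approximation‑type random element $\varphi$, obtained by applying the fixed point machinery to a suitable \emph{self}‑map manufactured from $f$; second, the verification that each of the inwardness conditions (i) and (ii) forces the best‑approximation point to be an actual fixed point. For the first part I would introduce the metric projection $P_S : X \to S$ onto the closed convex subset $S$ of the Hilbert space $X$; this is a (globally defined, single‑valued, nonexpansive) map, and it is also deterministic, so that $g(\omega,x) := P_S(f(\omega,x))$ is again a nonexpansive strong random operator from $\Omega \times S$ into $S$ (strong measurability of $g(\cdot,x)$ follows since $P_S$ is continuous and $f(\cdot,x)$ is a strong random element, and nonexpansiveness of $g$ in $x$ follows from that of $P_S$ and $f$). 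Since $f(\omega,S)$ is bounded, $g(\omega,S) \subset P_S(f(\omega,S))$ is bounded as well, with the same kind of uniform bound $r(\omega) = \sup\{\|f(\omega,s)\| : s \in S\}$ controlling the induced operator. Exactly as in the proof of Corollary \ref{corollary3.4}, $g$ induces a nonexpansive self‑map $\hat g : L^0(\mathcal F,S) \to L^0(\mathcal F,S)$ whose image is a.s.\ bounded, so by Corollary 3.12 and Remark 3.13 of \cite{GZWG18} (or equivalently Proposition \ref{proposition3.1} after passing to a closed a.s.\ bounded $L^0$--convex invariant subset) $\hat g$ has a fixed point $x$ in $L^0(\mathcal F,S)$. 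Choosing a representative $x^0 =: \varphi$ we get $P_S(f(\omega,\varphi(\omega))) = \varphi(\omega)$ for almost all $\omega$, and the standard characterisation of the metric projection in a Hilbert space, $\|u - P_S u\| = d(u,S)$, yields $\|\varphi(\omega) - f(\omega,\varphi(\omega))\| = d(f(\omega,\varphi(\omega)),S)$ a.s.

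For the "Further" part, fix a representative $\varphi$ as above and work pointwise in $\omega$ on a set of full measure. Suppose $\varphi(\omega) \neq f(\omega,\varphi(\omega))$ for $\omega$ in a set $A$ with $P(A) > 0$. Under hypothesis (i), for each such $\omega$ there is a point $y \in I_S(\varphi(\omega))$ with $\|y - f(\omega,\varphi(\omega))\| < \|\varphi(\omega) - f(\omega,\varphi(\omega))\| = d(f(\omega,\varphi(\omega)),S)$; since $S$ is convex, $I_S(\varphi(\omega)) \cap S$ contains a segment emanating from $\varphi(\omega)$, and a short convexity computation in the Hilbert space shows the nearest point of that segment to $f(\omega,\varphi(\omega))$ lies strictly closer than $\varphi(\omega)$ does — contradicting $\varphi(\omega) = P_S(f(\omega,\varphi(\omega)))$, which is the unique nearest point of $S$. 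Under hypothesis (ii), weak inwardness gives $f(\omega,\varphi(\omega)) \in \overline{I_S(\varphi(\omega))}$; a classical lemma (going back to the inward‑mappings literature, valid in any Hilbert space) states that if $z \in \overline{I_S(x)}$ and $x = P_S z$ then $z = x$. Hence $f(\omega,\varphi(\omega)) = \varphi(\omega)$ on a full‑measure set, so $\varphi$ is a random fixed point.

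The only genuinely delicate point is the \emph{joint measurability / strong‑measurability bookkeeping} for the auxiliary operator $g(\omega,x) = P_S(f(\omega,x))$: we must check that $g$ is still a \emph{strong} random operator (not merely a random operator) so that Corollary 3.12 of \cite{GZWG18} applies verbatim and so that the final representative $\varphi$ is genuinely a strong $\mathcal F$--random element into $S$. This is handled exactly as in Corollary \ref{corollary3.4}: $g(\cdot,x) = P_S \circ f(\cdot,x)$ is the composition of a fixed continuous map with a strong random element, hence a strong random element with separable range contained in $S$, and the induced $\hat g$ maps $L^0(\mathcal F,S)$ into itself with a.s.\ bounded image; the fixed point produced therefore has a representative landing in $S$ almost everywhere, which after modification on a null set is a bona fide $S$--valued strong random element. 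Everything else — the Hilbert‑space projection identity and the two elementary inwardness lemmas — is classical and pointwise in $\omega$, so presents no obstacle once the strong‑measurability issue is dispatched.
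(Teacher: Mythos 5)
Your proposal is correct and follows essentially the same route as the paper: compose $f$ with the metric (proximity) projection $P_S$ to get a nonexpansive strong random self--operator of $S$, apply Corollary \ref{corollary3.4} to obtain $\varphi$ with $P_S(f(\omega,\varphi(\omega)))=\varphi(\omega)$ a.s., and then settle conditions (i) and (ii) pointwise via the standard variational characterisation of the projection. The only difference is cosmetic: the paper outsources the pointwise inwardness arguments to Theorem 4 of \cite{L88}, whereas you spell them out (correctly).
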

\begin{proof}
Let $p:X\rightarrow S$ be the usual proximity mapping, then $p\circ f : \Omega \times S \to S$ is a nonexpansive strong random operator such that $p\circ f(\omega,S)$ is bounded for any $\omega\in\Omega$. By Corollary \ref{corollary3.4}, $p\circ f$ has an $S$--valued $\mathcal{F}$--strongly measurable random fixed point $\varphi$, which satisfies $\|\varphi(\omega) - f(\omega,\varphi(\omega))\| = d(f(\omega,\varphi(\omega)),S)$ for almost all $\omega \in \Omega$. The remaining part of this proof may proceed as in the proof of Theorem 4 of \cite{L88}.
\end{proof}

\noindent{\bf Acknowledgments:}

This work was supported by National Natural Science Foundation of China (Grant No. 11571369).

\end{document}